\numberwithin{equation}{section}
\newcommand{\ds}{\displaystyle}
\def\nm{\noalign{\medskip}}
\newtheorem{thm}{Theorem}[section]
\newtheorem{rmk}{Remark}[section]
\newtheorem{cor}{Corollary}[section]
\newtheorem{lem}{Lemma}[section]
\newtheorem{prop}{Proposition}[section]
\newtheorem{asump}{Assumption}[section]
 \def\p{\partial}
\def \Vh0{\stackrel{\circ}{V}_h} \def\to{\rightarrow}
\def\l{\label}  \def\f{\frac}  
\def\l|{\left|}
\def\r|{\right|}
\newcommand{\R}{\mathbb{R}}
\newcommand{\lc}
{\mathrel{\raise2pt\hbox{${\mathop<\limits_{\raise1pt\hbox
{\mbox{$\sim$}}}}$}}}
\newcommand{\gc}
{\mathrel{\raise2pt\hbox{${\mathop>\limits_{\raise1pt\hbox{\mbox{$\sim$}}}}$}}}
\newcommand{\ec}
{\mathrel{\raise2pt\hbox{${\mathop=\limits_{\raise1pt\hbox{\mbox{$\sim$}}}}$}}}
\def\be{\begin{equation}} \def\ee{\end{equation}}
\def\bea{\begin{eqnarray}}  \def\eea{\end{eqnarray}}
\def\beas{\begin{eqnarray*}} \def\eeas{\end{eqnarray*}}
\def\bn{\begin{enumerate}} \def\en{\end{enumerate}}
\def\bd{\begin{description}} \def\ed{\end{description}}
\title{Effective medium theory for acoustic waves in bubbly fluids near Minnaert resonant frequency\thanks{\footnotesize The work of Hai Zhang was supported by a startup fund from HKUST.}}
\date{April 28, 2016}
\author{
Habib Ammari\thanks{\footnotesize Department of Mathematics, 
ETH Z\"urich, 
R\"amistrasse 101, CH-8092 Z\"urich, Switzerland (habib.ammari@math.ethz.ch)}   
\,\,\, \,\, Hai Zhang\thanks{\footnotesize 
Department of Mathematics, 
 HKUST,  Clear Water Bay, Kowloon, Hong Kong (haizhang@ust.hk).}}
\begin{document}
\maketitle

\begin{abstract}
We derive an effective medium theory for acoustic wave propagation in bubbly fluid near Minnaert resonant frequency. We start with a multiple scattering formulation of the scattering problem of an incident wave by a large number of identical small bubbles in a homogeneous fluid. Under certain conditions on the configuration of the bubbles, we justify the point interaction approximation and establish an effective medium theory for the bubbly fluid as the number of bubbles tends to infinity. The convergence rate is also derived. 
As a consequence, we show that near and below the Minnaert resonant frequency, the obtained effective media can have a high refractive index, which is the reason for the super-focusing experiment observed in \cite{lanoy}. Moreover, our results indicate that the obtained effective medium can be dissipative above the Minnaert resonant frequency, while at that frequency, effective medium theory does not hold. Our theory sheds light on the mechanism of the extraordinary wave properties of metamaterials, which include bubbly fluid as an example, near resonant frequencies.  


\end{abstract}

\medskip

\bigskip

\noindent {\footnotesize Mathematics Subject Classification
(MSC2000): 35R30, 35C20.}

\noindent {\footnotesize Keywords: Minnaert resonance, bubbly media, point interaction approximation, 
effective medium theory, super-focusing, super-resolution.}


\section{Introduction} \label{sec-intro}

The study of acoustic wave propagation in bubbly fluid has a long history and is driven by  practical applications. It arose during World War II in an overall effort to exploit underwater sound in submarine warfare \cite{domenico}. Afterward, air bubble curtains were used to prevent damage of submerged explosives \cite{fessenden, prairie}. Nowadays, bubbles are used to super-focusing acoustic waves on a deep sub-wavelength scale \cite{lanoy}, as well as super-resolution in medical ultrasonic imaging \cite{tanter}. 

A distinctive feature of bubble in fluid is the high contrast between the air density inside  and outside of the bubble. This results in a quasi-static acoustic resonance, called the Minnaert resonance. At or near this resonant frequency, the size of bubble can be three order of magnitude smaller than
 the wavelength of the incident wave and the bubble behaves as a very strong monopole scatterer of sound. The resonance
makes the bubbles a good candidate for acoustic subwavelength resonator.  They have
the potential to be the basic building blocks for acoustic meta-materials which include bubbly fluids \cite{leroy1, leroy2, leroy3, leroy4}. We refer to \cite{hai3} for a rigorous mathematical treatment of Minnaert resonance and the derivation of the monopole approximation. In this paper, we aim to investigate the extraordinary properties of bubbly fluid for acoustic waves and to understand the mechanism behind. 

There are many interesting works on the acoustic bubble problems; see, for instance,  \cite{caflish, caflish2, kargl, leroy1, leroy2}. Effective equations for wave propagation in bubbly fluids have been derived in the low frequency regime where the frequency is much smaller than the Minneart resonance frequency \cite{caflish, caflish2,kargl}.  In this paper, however, we are concerned with wave propagation near the resonant regime. Our main approach consists of the use of point interaction approximation, which is valid for bubbles with small volume fraction, and the study of its continuum limit.  We recall that the idea of point interaction approximation goes back to Foldy's paper \cite{Foldy}. It is a natural tool to analyze a variety of interesting problems in the continuum limit. It was applied in \cite{Figari} on the heat conduction in material with many small holes and in \cite{caflish} on sound propagation in bubbly fluid at frequency much smaller than the Minnaert resonant frequency. As pointed out in \cite{Figari}, in almost all papers that followed Foldy's, the point interaction approximation in not treated as an important approximation in itself and averaging is carried out over the configuration of the points locations. It is worth emphasizing that averaging is not necessary. The continuum limit holds for deterministic sequences satisfying certain conditions  subject to some other conditions that hold for most of realizations in the random case. In this regard, our paper can be viewed as a 
realization of this idea.

The main contributions of our paper are the following.  
First, we rigorously justify the point interaction approximation for the acoustic scattering of bubbles in fluid under certain conditions. Second, we analyze its continuum limit near the Minnaert resonant frequency, and propose conditions on the distribution of the bubbles which guarantee the validity of the continuum limit. Third, we derive the convergence rate as the number of bubbles goes to infinity and prove that the convergence is
valid outside of neighborhoods of the bubbles.  
Compared with \cite{caflish}, the case we consider near the Minnaert resonant frequency is much more intriguing. As our main results demonstrate, the obtained effective medium is very dispersive near and below the Minnaert resonant frequency and depends sensitively on the volume fraction of the bubbles. At very low volume fraction, the effective refractive index does not change much. It increases to high refractive index when the volume fraction increases and becomes infinity after reaching certain threshold volume fraction. The effective medium also depends sensitively on the frequency. At resonant frequencies, we note that we cannot treat the bubbly fluid as an effective medium. However, effective medium theory is possible at nearby frequencies with a balance with the volume fraction. This balance controls the interaction of the bubbles and is critical for the validity of the effective medium theory. Moreover, when the frequency is above the Minnaert resonant frequency, effective medium theory is still possible and the effective medium may be dissipative which is characterized by attenuating the wave fields therein, which is very different from the wave behavior below the Minnaert resonant frequency.   

The paper is organized as follows. We start with a multiple scattering formulation of the scattering problem of an incident plane wave by a large number of identical small bubbles in a homogeneous fluid in Section \ref{sec-bubbly media}. We also propose assumptions on the configuration of the bubbles for the relevance of our effective medium theory.  
In Section \ref{sec-point interaction}, we justify the point interaction approximation for the scattering of bubbles. In Section \ref{sec-effective media}, we first show the well-posedness of the point interaction system and deduce its limiting behavior. Then we derive an effective medium theory as the number of bubbles tends to infinity. The convergence rate is established. As a consequence, we show that near and below the Minnaert resonant frequency for an individual bubble, the effective medium can have a high refractive index,
justifying the super-focusing effect observed in  \cite{lanoy}. Super-focusing is to push the diffraction limit by reducing the focal spot size so that waves can be confined to a length scale significantly smaller than the diffraction limit which is the half  wavelength of the wave in the free space. Super-focusing 
is the counterpart of super-resolution. Using time reversal imaging method, super-resolution can be achieved in media which can super-focus waves. Finally, the proof of a technical result in the paper is given in Section \ref{sec-proof}.


\section{Bubbly medium for acoustic waves and assumptions} \label{sec-bubbly media}

Consider the scattering of acoustic waves by $N$ identical bubbles distributed in a homogeneous fluid in $\R^3$. The bubbles are represented by
$$D^N:=\cup_{1\leq j \leq N}D_j^N,$$ where $D_j^N=y_j^N + s B$ for $1\leq j \leq N$ with $y_j^N$ being the location, $s$ being the characteristic  size and $B$ being the normalized 
bubble which is a smooth and simply connected domain with size of order one. We denote by $\rho_b$ and $\kappa_b$ the density and the bulk modulus of the air inside the bubble respectively, which are different from the corresponding  $\rho$ and $\kappa$ in the background medium $\R^3 \backslash D^N$.

We assume that $0< s \ll 1$, $N\gg 1$ and that $\{y_j^N\} \subset \Omega$. Let $u^i$ be the incident wave which we assume to be a plane wave for simplicity. The scattering can be modeled by the following system of equations: 
\be \label{eq-scattering}
\left\{
\begin{array} {ll}
&\nabla \cdot \f{1}{\rho} \nabla  u^N+ \frac{\omega^2}{\kappa} u^N  = 0 \quad \mbox{in } \R^3 \backslash D^N, \\
&\nabla \cdot \f{1}{\rho_b} \nabla  u^N+ \frac{\omega^2}{\kappa_b} u^N  = 0 \quad \mbox{in } D^N, \\
& u^N_{+} -u^N_{-}  =0   \quad \mbox{on } \partial D^N, \\
&  \f{1}{\rho} \f{\p u^N}{\p \nu} \bigg|_{+} - \f{1}{\rho_b} \f{\p u^N}{\p \nu} \bigg|_{-} =0 \quad \mbox{on } \partial D^N,\\
&  u^N- u^{i}  \,\,\,  \mbox{satisfies the Sommerfeld radiation condition}, 
  \end{array}
 \right.
\ee
where $u^N$ is the total field and $\omega$ is the frequency. 

We introduce four auxiliary parameters to facilitate our analysis:
$$
v = \sqrt{\frac{\rho}{\kappa}}, \,\, v_b = \sqrt{\frac{\rho_b}{\kappa_b}}, \,\, k=\omega v, \,\, k_b= \omega v_b.
$$

We also introduce two dimensionless contrast parameters 
$$
\delta = \f{\rho_b}{\rho}, \,\, \tau= \f{k_b}{k}= \f{v_b}{v} =\sqrt{\f{\rho_b \kappa}{\rho \kappa_b}}. 
$$

By choosing proper physical units, we may assume that both the frequency $\omega$ and the wave speed outside the bubbles are of order one. As a result, the wavenumber $k$ outside the bubbles is also of order one. We assume that there is a large contrast between both the density and bulk modulus inside and outside the bubbles.
However, the contrast between the wave speeds are small. Thus, both the wave speed 
and wavenumber $k_b$ inside the bubbles are of order one. 
To sum up, we assume that $\delta \ll 1 $, $\tau= O(1)$. We also assume that the domain of interest $\Omega$ has size of order one. 

We use layer potentials to represent the solution to the scattering problem (\ref{eq-scattering}). Let the single layer potential $\mathcal{S}_{D}^{k}$ associated with a domain $D$ and wavenumber $k$ be defined by
$$
\mathcal{S}_{D}^{k} [\psi](x) =  \int_{\p D} G(x, y, k) \psi(y) d\sigma(y),  \quad x \in  \p {D},
$$
where $$G(x, y, k)= - \f{e^{ik|x-y|}}{4 \pi|x-y|}$$ is the  Green function of the Helmholtz equation in $\R^3$, subject to the Sommerfeld radiation condition. When $k=0$, we use the short notation 
$$
\mathcal{S}_{D}= \mathcal{S}_{D}^{0}.
$$
We also define boundary integral operator $\mathcal{K}_{D}^{k, *}$ by
$$
\mathcal{K}_{D}^{k, *} [\psi](x)  = \int_{\p D } \f{\p G(x, y, k)}{ \p \nu(x)} \psi(y) d\sigma(y) ,  \quad x \in \p D. 
$$

Then the solution $u^N$ can be written as 
\be \label{Helm-solution}
u^N(x) = \left\{
\begin{array}{lr}
u^{in} + \mathcal{S}_{D^N}^{k} [\psi^N], & \quad x \in \R^3 \backslash \overline{D^N},\\
\nm
\mathcal{S}_{D}^{k_b} [\psi_b^N] ,  & \quad x \in {D^N},
\end{array}\right.
\ee
for some surface potentials $\psi, \psi_b \in  L^2(\p D^N)$. Here, we have used the notations
\beas
L^2(\p D^N) &=& L^2(\p D^N_1) \times L^2(\p D^N_2) \times \cdots \times L^2(\p D^N_N), \\ 
\mathcal{S}_{D^N}^{k} [\psi^N]&= &\sum_{1\leq j \leq N} \mathcal{S}_{D^N_j}^{k} [\psi_j^N], \\
\mathcal{S}_{D}^{k_b} [\psi_b^N]&= & \sum_{1\leq j \leq N} \mathcal{S}_{D^N_j}^{k} [\psi_{bj}^N].
\eeas

Using the jump relations for the single layer potentials, it is easy to derive that $\psi$ and $\psi_b$ satisfy the following system of boundary integral equations:
\be  \label{eq-boundary}
\mathcal{A}^N(\omega, \delta)[\Psi^N] =F^N,  
\ee
where
\[
\mathcal{A}^N(\omega, \delta) = 
 \begin{pmatrix}
  \mathcal{S}_{D^N}^{k_b} &  -\mathcal{S}_{D^N}^{k}  \\
  -\f{1}{2}Id+ \mathcal{K}_{D^N}^{k_b, *}& -\delta( \f{1}{2}Id+ \mathcal{K}_{D^N}^{k, *})
\end{pmatrix}, 
\,\, \Psi^N= 
\begin{pmatrix}
\psi_b^N\\
\psi^N
\end{pmatrix}, 
\,\,F^N= 
\begin{pmatrix}
u^{in}\\
\delta \f{\partial u^{in}}{\partial \nu}
\end{pmatrix}|_{\p D^N}.
\]

One can show that the scattering problem (\ref{eq-scattering}) is equivalent to the boundary integral equations (\ref{eq-boundary}).Furthermore, it is well-known that there exists a unique solution to the scattering problem (\ref{eq-scattering}), or equivalently to the system (\ref{eq-boundary}). 

Throughout the paper, we denote $\mathcal{H} = L^2(\p D^N) \times L^2(\p D^N)$ and  $\mathcal{H}_1 = H^1(\p D^N) \times L^2(\p D^N)$, 
and use $(\cdot, \cdot)$ for the inner product in $L^2$ spaces. 
It is clear that $\mathcal{A}^N(\omega, \delta)$ is a bounded linear operator from $\mathcal{H}$ to $\mathcal{H}_1$, i.e., 
$\mathcal{A}^N(\omega, \delta) \in \mathcal{L}(\mathcal{H}, \mathcal{H}_1)$. 
We also use the following convention: let $a_N$ and $b_N$ be two real numbers which may depend on $N$, then
$$
a_N \lesssim b_N
$$
means that $a_N \leq C \cdot b_N$ for some constant $C$ which is independent of $a_N$, $b_N$ and $N$.
%


We are interested in the case when there is a large number of small identical bubbles distributed in a bounded domain and the incident wave has a frequency near the Minnaert resonant frequency for an individual bubble. We recall that for the bubble given by $D_j^N= y_j^N + sB$, its corresponding 
Minnaert resonant frequency $\omega_M$ is
$$
\omega_M= \f{1}{s}\sqrt{\f{Cap(B) \delta}{\tau^2 v^2 Vol(B)}}, 
$$
where $Cap(B):= (\mathcal{S}_{B}^{-1}(\chi_{\p B}), \chi_{\p B})_{L^2(\p B)}$ and $Vol(B)$ are the capacity and volume of $B$, respectively. Here, $\chi_{\p B}$ denotes the characteristic function of $\partial B$. 

Throughout the paper, we assume that the following assumption holds:
\begin{asump} \label{assump0-1}
The frequency $\omega =O(1)$ and is independent of $N$. Moreover, 
\be  \label{formula-w}
1- (\f{\omega_M}{\omega})^2 = \beta_0 s^{\epsilon_1}
\ee
for some fixed $0 < \epsilon_1 < 1$ and constant $\beta_0$.
\end{asump}

There are two cases depending on whether $\omega > \omega_M$ or 
$\omega < \omega_M$. In the former case, we have $\beta_0 >0$, while in the 
latter case we have $\beta_0 <0$. We shall see later on that acoustic wave propagation is quite 
different in these two cases. In fact, the wave field may be dissipative in the former case while highly oscillatory and propagating in the latter case. 
We also assume the following. 
\begin{asump} \label{asump0-4}
The following identity holds
\be  \label{formula-s}
s^{1-\epsilon_1}\cdot N =\Lambda,
\ee
where $\Lambda$ is a constant independent of $N$. Moreover, we will assume that $\Lambda$ is large. 
\end{asump}

Therefore, we have
\be
\delta = \omega^2 s^2 (1-s^{\epsilon_1}) \cdot \f{\tau^2 v^2 Vol(B)}{Cap(B)}.
\ee

We note that we have rescaled the original physical problem by imposing the condition that $\omega_M$ is 
of order one. Consequently, the physical parameters $s$ and $\delta$ associated with the size and contrast of the bubbles both depend on $N$. Equation (\ref{formula-s}) gives the volume fraction while 
Equation (\ref{formula-w}) controls the deviation of frequency from the Minnaert resonant frequency. In the limiting process when $N \to \infty$, we have $s \to 0, \, \delta \to 0$.

We assume that the size of each bubble is much smaller than the typical distance between neighboring bubbles so that we may simplify the system by point scatterer approximation. 
More precisely, we make the following assumption. 

\begin{asump} \label{assump0}
The following conditions hold:
\[
\left\{
\begin{array}{ll}
\min_{i \neq j } |y^N_i -y^N_j|  \geq  r_N ,\\
s  \ll r_N,
  \end{array}
\right.
\]
where 
$
r_N = \eta N^{-\f{1}{3}} 
$
for some constant $\eta$ independent of $N$.  Here, $r_N$ can be viewed as the minimum separation distance between neighboring bubbles. 
\end{asump}

Following \cite{papanicolaou}, we assume that there exists $\tilde{V}\in L^{\infty}(\Omega)$ such that
\be  \label{asump10}
\Theta^N (A) \to \int_{A} \tilde{V}(x) dx , \quad \mbox{as} \,\, N \to \infty,
\ee
for any measurable subset $A \subset \R^3$, where
$\Theta^N (A)$ is defined by
\[
\Theta^N (A)= \f{1}{N}  \times \{  \mbox{number of points $y_j^N$ in $A \subset \R^3$}\}. 
\]

In addition, we assume that the following condition on the regularity of the  ``sampling'' points $\{y_j^N\}$ holds.
\begin{asump} \label{assump2}
There exists $0< \epsilon_0<1$ such that for all $h \geq 2r_N$: 
\bea
\f{1}{N}\sum_{|x-y_j^N| \geq h } \f{1}{|x-y_j^N|^2} & \lesssim & |h|^{-\epsilon_0}, \quad \mbox{uniformly for all }\,\,x \in \Omega ,  \label{assump2-2}\\
\f{1}{N}\sum_{2r_N \leq |x-y_j^N| \leq 3h } \f{1}{|x-y_j^N|} &\lesssim & |h|, \quad \mbox{uniformly for all }\,\,x \in \Omega. \label{assump2-3}
\eea
\end{asump}

\begin{rmk}
Note that one can choose $\epsilon_0$ to be a small number in Assumption \ref{assump2}.
One can show that (\ref{assump2-2}) and (\ref{assump2-3}) are respectively equivalent to the following ones 
\bea
\max_{l} \{ \f{1}{N}\sum_{|y_l^N-y_j^N| \geq h} \f{1}{|y_l^N-y_j^N|^2} \}  & \lesssim & h^{-\epsilon_0};\\
\max_{l} \{ \f{1}{N}\sum_{2r_N \leq |y_l^N-y_j^N| \leq 3h } \f{1}{|y_l^N-y_j^N|} \} &\lesssim & h.
\eea
Indeed, these estimates follow from the fact that for each $x \in \Omega$ there exists a finite number of points $y_{j_1}^N$,  $y_{j_2}^N$, ... $y_{j_L}^N$ in the neighborhood of $x$ with $L$ independent of $N$ such that
$$
\f{1}{|x-y_j^N|^2} \leq \sum_{1\leq i \leq L}\f{1}{|y_{j_i}^N-y_j^N|^2}, \quad 
\f{1}{|x-y_j^N|} \leq \sum_{1\leq i \leq L}\f{1}{|y_{j_i}^N-y_j^N|},
$$
for all $y_j^N$ such that $|x-y_j^N| \geq h $. 
\end{rmk}

Following \cite{ozawa}, we also assume the following.  

\begin{asump} \label{assump1}
For any $f \in C^{0, \alpha}(\Omega)$ with $0 < \alpha \leq 1$,
\be  
\max_{1\leq j \leq N} | \f{1}{N} \sum_{i \neq j} G(y_j^N,y_j^N,k) f(y_j^N) -
\int_{\Omega} G(y_j^N, y,k) \tilde{V}(y) f(y) dy| \lesssim \f{1}{N^{\f{\alpha}{3}}}\|f\|_{C^{0, \alpha}(\Omega)}.  
\ee
\end{asump}

\begin{rmk}
By decomposing $G(x, y, k)$ into the singular part,  
$G(x, y, 0)$,  and a smooth part, one can show that Assumption \ref{assump1} is equivalent to 
\be  \label{asump0-1}
\max_{1\leq j \leq N} | \f{1}{N} \sum_{i \neq j}\f{1}{|y_i^N-y_j^N|} f(y_i^N) -
\int_{\Omega} \f{1}{|y-y_j^N|} \tilde{V}(y) f(y) dy| \lesssim \f{1}{N^{\f{\alpha}{3}}}|f\|_{C^{0, \alpha}}.  
\ee
\end{rmk}

For $1\leq j \leq N$, denote by  
\beas
u^{i,N}_j&=& u^i + \sum_{i \neq j} \mathcal{S}_{D_i^N} ^k [\psi_i^N], \\
u^{s,N}_j &=&  \mathcal{S}_{D_j^N}^k [\psi_j^N].
\eeas
It is clear that $u^{i,N}_j$ is the total incident field which impinges on the bubble $D_j^N$ and $u^{s,N}_j$ is the corresponding scattered field. In the next section, we shall justify the point interaction approximation. For this purpose, we need an additional 
assumption. 

\begin{asump} \label{assump0-3}
$\epsilon_0 < \f{3 \epsilon_1}{ 1- \epsilon_1}$. 
\end{asump}

\begin{rmk}
Assumptions \ref{assump0-1} and \ref{assump0-3} are important in our justification of the point interaction approximation, see Proposition \ref{prop-point-approx}. 
The assumption that $\epsilon_1 >0$ is critical here. For the case $\epsilon_1= 0$, the frequency is away from the Minnaert resonant frequency. The scattering coefficient $g$ has magnitude of order $s$. 
The fluctuation in the scattered field from all the other bubbles may generate multipole modes which are comparable with the  monopole mode and hence invalidate the
monopole point interaction approximation. We leave this case as an open question for future investigation. 
\end{rmk}

\begin{rmk}
Assumptions \ref{assump0-1} and \ref{asump0-4} are important in our effective medium theory. 
The parameter $\epsilon_1$ in Assumption \ref{assump0-1} controls the deviation of the frequency from the Minnaert resonant frequency, which further controls the amplitude of the scattering strength of each bubble. This parameter, together with $\Lambda$, also controls the volume fraction of the bubbles through Assumption \ref{asump0-4}.  In an informal way, if the bubble volume fraction is below the level as set by Assumption \ref{asump0-4}, say $s^{1-\epsilon_3}\cdot N =O(1)$ for some $\epsilon_3 < \epsilon_1$, then the effect of the bubbles is negligible and the effective medium would be the same as if there are no bubbles in  the limit as $N \to \infty$. On the other hand, if 
$s^{1-\epsilon_3}\cdot N =O(1)$ for some $\epsilon_3 > \epsilon_1$, then the bubbles interact strongly with 
each other and eventually behave as a medium with infinite effective refractive index. 
Only at the appropriate volume fraction as in Assumption \ref{asump0-4}, we have an effective medium theory with finite refractive index. The larger $\Lambda$ is, the higher the effective refractive index is. These statements can be justified by
the method developed in the paper.  
\end{rmk}

\begin{rmk}
One can easily check that Assumptions \ref{assump0}, \ref{assump2} and \ref{assump1} hold for
periodically distributed $y_j^N$'s. 
\end{rmk}


\section{Justification of the point interaction approximation} \label{sec-point interaction}
In this section, we justify the point interaction approximation under the assumptions we made in the previous section. Our main result is the following. 

\begin{prop} \label{prop-point-approx}
Under Assumptions \ref{assump0-1}, \ref{assump0}, \ref{assump2} and \ref{assump0-3}, 
the following 
relation between $u^{s,N}_j$ and $u^{i,N}_j$ holds for all $x$ such that $|x-y_j^N| \gg s$:
\beas
u^{s,N}_j(x)
= G(x, y_0, k) \cdot g \cdot \left( u_j^{i,N}(y_j^N) + O[N^{\f{\epsilon_0}{3} -\f{\epsilon_1}{1-\epsilon_1}}
+\f{s}{|x-y_j^N|} ]  \cdot \max_{1\leq l \leq N} |u_j^{i,N}(y_j^N)| \right). 
\eeas
Moreover, for $x=y_j^N$, 
$$
u_j^{i,N} (y_j^N) = u^i(y_j^N) + \sum_{i \neq j} u^{s,N}_i(y_j^N) 
=u^i(y_j^N) + \sum_{i \neq j} g \cdot \left(u^{i,N}_i(y_i^N) + p_i^N\right) G(y_j^N,y_i^N, k),
$$
where  
\beas
g&=&g(\omega, \delta, D_j^N) = -\f{s Cap(B)}{ 1- (\f{\omega_M}{\omega})^2 + i\gamma} (1+ O(s)+O(\delta)),\\
\gamma &=& \f{(\tau +1)v Cap(B)s\omega}{8 \pi} - \f{(\tau-1)Cap(B)^2 \delta}{8 \pi \tau^2 v Vol(B) \omega s},
\eeas
are the scattering and damping coefficients near the Minnaert resonant frequency respectively, 
and $p^N_i$ satisfies 
\[
|p^N_i| = \max_{1\leq i \leq N}|u^{i,N}_i(y_i^N)| \cdot O(N^{\f{\epsilon_0}{3} -\f{\epsilon_1}{1-\epsilon_1}}).
\]

\end{prop}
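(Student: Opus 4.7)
The plan is to combine a single-bubble asymptotic analysis near the Minnaert resonance with a configuration-averaged control of the multiple-scattering feedback, closed by a fixed-point argument that exploits Assumption \ref{assump0-3}.

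\textbf{Single-bubble step.} I first restrict the boundary integral system (\ref{eq-boundary}) to the block associated with $D_j^N$, treating $u_j^{i,N}$ as a given smooth field on a neighborhood of $D_j^N$; this is well defined because $s \ll r_N$ places all singularities of the scattered waves from other bubbles away from $D_j^N$. Pulling back to the reference bubble $B$ via $y = y_j^N + s\tilde y$ and expanding $\mathcal{S}_{D_j^N}^{k_b}$, $\mathcal{S}_{D_j^N}^{k}$, and $\mathcal{K}_{D_j^N}^{k,*}$ in powers of $ks$, the leading contribution to $\psi_j^N$ is a monopole proportional to $\mathcal{S}_B^{-1}[\chi_{\partial B}]$ with amplitude $g \cdot u_j^{i,N}(y_j^N)$; the resonant denominator $1-(\omega_M/\omega)^2 + i\gamma$ in $g$ emerges from balancing the static principal part of the interior operator against the $s^2\omega^2$ correction and the imaginary radiation tail, exactly as in the Minnaert-resonance derivation of \cite{hai3}. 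Higher-multipole and subleading terms in $\psi_j^N$ carry additional powers of $s$, and Taylor-expanding $G(x, \cdot, k)$ about $y_j^N$ on $\partial D_j^N$ then produces the far-field formula with the explicit $O(s/|x-y_j^N|)$ error term.

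\textbf{Fluctuation estimate.} To bound $p_i^N$ I decompose $u_i^{i,N} = u^i + \sum_{l \neq i} \mathcal{S}_{D_l^N}^k[\psi_l^N]$ and measure the oscillation of each summand across $\partial D_i^N$. The smooth incident wave contributes at most $O(s)$. For each other bubble I substitute the single-bubble formula just derived and use $|\nabla_x G(x, y_l^N, k)| \lesssim |x-y_l^N|^{-2}$, producing a per-bubble contribution of order $|g|\, s\, |y_i^N - y_l^N|^{-2} \, |u_l^{i,N}(y_l^N)|$. Summing over $l \neq i$ and applying (\ref{assump2-2}) from Assumption \ref{assump2} at scale $h = 2r_N \sim N^{-1/3}$ gives
\[
\sum_{l \neq i} \frac{1}{|y_i^N - y_l^N|^2} \lesssim N\, r_N^{-\epsilon_0} \lesssim N^{1+\epsilon_0/3}.
\]
Inserting $|g| \lesssim s^{1-\epsilon_1}$ from the resonant formula, together with the volume-fraction identity $s^{1-\epsilon_1}N = \Lambda$ from Assumption \ref{asump0-4}, the full sum is majorized by a constant multiple of $N^{\epsilon_0/3 - \epsilon_1/(1-\epsilon_1)} \cdot \max_{l} |u_l^{i,N}(y_l^N)|$, which combined with the subleading $O(s)$ multipole remainder yields the stated bound on $p_i^N$.

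\textbf{Closing the loop and main obstacle.} The two steps are circularly linked: bounding $p_i^N$ invokes the single-bubble formula for neighboring bubbles, whose error in turn depends on $\max_l |u_l^{i,N}(y_l^N)|$. The central technical difficulty is to obtain an a priori uniform bound on this maximum. I do this by evaluating the self-consistency identity at each $y_j^N$ and viewing the resulting $N\times N$ linear system for the unknowns $\{u_j^{i,N}(y_j^N)\}$ as a perturbation of its diagonal part; its invertibility with operator norm bounded independently of $N$ is ensured precisely because Assumption \ref{assump0-3}, $\epsilon_0 < 3\epsilon_1/(1-\epsilon_1)$, makes the error exponent $\epsilon_0/3 - \epsilon_1/(1-\epsilon_1)$ strictly negative, so the off-diagonal feedback contracts for $N$ large. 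Once the fixed-point is closed, the two claimed identities follow by substituting the uniform bound on $\max_l |u_l^{i,N}(y_l^N)|$ into the single-bubble far-field expansion and reading off the stated $O(\cdot)$ error.
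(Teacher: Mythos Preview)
Your single-bubble step and your identification of the key small parameter $N^{\epsilon_0/3-\epsilon_1/(1-\epsilon_1)}$ are correct and match the paper. But the ``closing the loop'' step is misconceived in two ways.

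First, the proposition does \emph{not} ask you to bound $\max_l |u_l^{i,N}(y_l^N)|$; every error term in the statement is left proportional to that maximum. An a priori bound on it is established only later (Proposition~\ref{prop1} and Theorem~\ref{thm1}), by comparison with the continuum operator $\mathcal{T}$. Second, your proposed argument for that bound is wrong: the $N\times N$ matrix $T^N$ with entries $gG(y_i^N,y_j^N,k)$ is \emph{not} a small perturbation of the identity. Its row sums satisfy $\sum_{i\neq j}|gG(y_i^N,y_j^N,k)| \sim |g|\cdot N\cdot O(1) \sim \Lambda = O(1)$, using $|g|\sim s^{1-\epsilon_1}=\Lambda/N$ and $\frac{1}{N}\sum_{i\neq j}|y_i^N-y_j^N|^{-1}=O(1)$. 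The small factor $N^{\epsilon_0/3-\epsilon_1/(1-\epsilon_1)}$ sits in $q^N$ (through $p_i^N$), not in $T^N$, so a Neumann-series/contraction argument on $I-T^N$ fails.

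The loop that actually closes here is at the level of the layer densities, and this is where your fluctuation step should be rewritten. Rather than substituting the single-bubble far-field formula for neighboring bubbles (which re-introduces the very errors you are trying to control), bound the oscillation of $\mathcal{S}_{D_l^N}^k[\psi_l^N]$ across $\partial D_j^N$ directly by Taylor-expanding $G$: this gives $\|F_{j,2}\|_{H^1(\partial D_j^N)}\lesssim \sum_{l\neq j}s^2|y_j^N-y_l^N|^{-2}\|\psi_l^N\|_{L^2}$ (Lemma~\ref{lem-tech-11} in the paper). Feeding this into the single-bubble expansion yields
\[
\|\psi_j^N\|_{L^2}\ \lesssim\ |u_j^{i,N}(y_j^N)|\cdot\Big|\tfrac{g}{Cap(D_j^N)}\Big|\ +\ \Big(\textstyle\sum_{l\neq j}\tfrac{s}{|y_j^N-y_l^N|^2}\Big)\max_l\|\psi_l^N\|_{L^2},
\]
and the coupling coefficient here is precisely $\lesssim N^{\epsilon_0/3-\epsilon_1/(1-\epsilon_1)}\to 0$ by Assumptions~\ref{assump2} and~\ref{assump0-3}. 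Taking the maximum over $j$ then gives $\max_j\|\psi_j^N\|_{L^2}\lesssim |g/Cap(D_j^N)|\max_j|u_j^{i,N}(y_j^N)|$, and both claimed formulas follow by substitution. So the contraction lives in the map $(\|\psi_l^N\|)\mapsto(\|\psi_j^N\|)$, not in $I-T^N$.
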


\begin{proof}
We follow the proof of Theorem 3.1 in \cite{hai3}.
Some modifications are needed because of two main differences between the case considered in 
\cite{hai3} and here: 
the first difference is that the bubble size 
is normalized to be of order one in \cite{hai3} while it is of order $s \ll 1$ here, the other one is that the incident field is assumed be to a plane wave in \cite{hai3} while this is no longer valid here. However, the approach still applies. 

First, by Taylor series expansion of $G(x, y, k)$ with respect to $y$ around $y_j^N$, we have  
\be
\label{numberq}
\begin{array}{lll}
u^{s,N}_j(x) &=& \ds \int_{\p D_j^N} G(x, y, k) \psi_j^N(y) d\sigma(y) \\
\nm
&=& \ds G(x, y_j^N, k) \left( (\chi_{\p D_j^N}, \psi_j^N)_{L^2} +O(\f{s}{|x-y_j^N|})\cdot s \cdot\|\psi_j^N\|_{L^2} \right).
\end{array}
\ee
On the other hand, from the argument in \cite{hai3}, one can obtain
\[
\psi_j^N= u_j^{i,N}(y_j^N) \mathcal{S}_{D_j^N}^{-1} [\chi_{\p D_j^N}] \cdot \f{g}{ Cap(D_j^N)}
+ \f{1}{s} \cdot O(\|F_{j,2}\|_{H^1(\p D_j^N)}), \quad \mbox{in}\,\, L^2(\p D_j^N),
\]
where 
$$
F_{j,2} (y) = u^{i,N}_j(y) -   u^{i,N}_j(y_j^N) = \sum_{i \neq j} \left(\mathcal{S}_{D_i^N}^k[\psi_i^N](y) -\mathcal{S}_{D_i^N}^k[\psi_i^N](y_j^N) \right).  
$$ 
By Lemma \ref{lem-tech-11}, we get
$$
\|\mathcal{S}_{D_j^N}^k(\psi_i^N)(y) -\mathcal{S}_{D_j^N}^k(\psi_i^N)(y_j^N)\|_{H^1(\p D_j^N)} 
\lesssim \f{1}{|y-y_j^N|^2}  \cdot s^2 \cdot \| \psi_i^N\|_{L^2(\p D_j^N)}. 
$$
Thus, 
\[
\|F_{j,2}\|_{H^1(\p D_j^N)}  \lesssim \sum_{i\neq j} \f{1}{|y_i^N-y_j^N|^2}  \cdot s^2 \cdot \max_{1\leq l \leq N} \| \psi_i^N\|_{L^2(\p D_j^N)}.
\]
Therefore, it follows that
$$
\| \psi_j^N\|_{L^2(\p D_j^N)}  \lesssim |u_j^{i,N}(y_j^N)| \cdot \| \mathcal{S}_{D_j^N}^{-1} [\chi_{\p D_j^N}] \|_{L^2(\p D_j^N)} \cdot |\f{g}{ Cap(D_j^N)}| + \sum_{i\neq j} \f{1}{|y-y_j^N|^2}  \cdot s \cdot \max_{1\leq l \leq N} \| \psi_l^N\|_{L^2(\p D_l^N)}.
$$
Note that $\| \mathcal{S}_{D_j^N}^{-1} [\chi_{\p D_j^N}] \|_{L^2(\p D_j^N)} =O(1)$ and 
\[
\sum_{i\neq j} \f{1}{|y_i^N-y_j^N|^2}  \cdot s  \lesssim r_N^{-\epsilon_0} s\cdot N \lesssim  N^{\f{\epsilon_0}{3} -\f{\epsilon_1}{1-\epsilon_1}},
\]
where we have used Assumption \ref{assump0-1} in the last inequality.
We can therefore conclude that
\be \label{estimate111}
\max_{1\leq j \leq N}\| \psi_j^N\|_{L^2(\p D_j^N)}  \lesssim \max_{1\leq j \leq N} |u_j^{i,N}(y_j^N)| \cdot |\f{g}{Cap(D_j^N)}|.
\ee
Consequently, by (\ref{numberq}), 
\beas
u^{s,N}_j(x)
&=& G(x, y_0, k) \left( (\chi_{\p D_j^N}, \psi_j^N)_{L^2} +O(\f{s}{|x-y_j^N|})\cdot s \cdot \|\psi_j^N\|_{L^2} \right) \\ 
&=& G(x, y_0, k) \left( (\chi_{\p D_j^N}, \psi_j^N)_{L^2} +O(\f{s}{|x-y_j^N|})\max_{1\leq j \leq N} |u_j^{i,N}(y_j^N)| \cdot |g| \right). 
\eeas 
Since 
\beas
(\chi_{\p D_j^N}, \psi_j^N)_{L^2} &=&\left(\chi_{\p D_j^N}, u_j^{i,N}(y_j^N) \mathcal{S}_{D_j^N}^{-1} [\chi_{\p D_j^N}] \cdot \f{g}{ Cap(D_j^N)} \right)_{L^2} + O(\|F_{j,2}\|_{H^1(\p D_j^N)}) \\
&=& u_j^{i,N}(y_j^N) g + O(s \cdot N^{\f{\epsilon_0}{3} -\f{\epsilon_1}{1-\epsilon_1}})  \cdot \max_{1\leq l \leq N} \| \psi_i^N\|_{L^2(\p D_j^N)} \\
&=& u_j^{i,N}(y_j^N) g + O(s \cdot N^{\f{\epsilon_0}{3} -\f{\epsilon_1}{1-\epsilon_1}})  \cdot \max_{1\leq j \leq N} |u_j^{i,N}(y_j^N)| \cdot |\f{g}{Cap(D_j^N)}|\\
&=& g \left(u_j^{i,N}(y_j^N) + O(N^{\f{\epsilon_0}{3} -\f{\epsilon_1}{1-\epsilon_1}})  \cdot \max_{1\leq l \leq N} |u_j^{i,N}(y_j^N)| \right),
\eeas
we arrive at
\beas
u^{s,N}_j(x)
= G(x, y_0, k) g \left( u_j^{i,N}(y_j^N) + O[N^{\f{\epsilon_0}{3} -\f{\epsilon_1}{1-\epsilon_1}}
+\f{s}{|x-y_j^N|} ]  \cdot \max_{1\leq l \leq N} |u_j^{i,N}(y_j^N)| \right).
\eeas
Finally, note that
$$
u^{i,N}_j(x) = u^i(x) + \sum_{i \neq j} u^{s,N}_i(x).
$$
By taking $x=x_i^N$ and using the assumption that
$$
|x_i^N-x_j^N| \geq r_N, 
$$
we obtain
$$
\f{s}{|x-y_j^N|} \leq \f{s}{r_N} \lesssim \f{1}{N} \cdot s^{\epsilon_1} \cdot N^{\f{1}{3}} \lesssim N^{\f{\epsilon_0}{3} -\f{\epsilon_1}{1-\epsilon_1}}.
$$
The second part of the proposition follows immediately. 
\end{proof}

\begin{lem} \label{lem-tech-11}
The following estimate holds:
\be \label{estimate111b}
\|\mathcal{S}_{D_j^N}^k(\psi_i^N)(y) -\mathcal{S}_{D_j^N}^k(\psi_i^N)(y_j^N)\|_{H^1(\p D_j^N)} 
\lesssim \f{1}{|y_i^N-y_j^N|^2}  \cdot s^2 \cdot \| \psi_i^N\|_{L^2(\p D_j^N)}. 
\ee
\end{lem}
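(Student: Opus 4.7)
The plan is to read the statement with the natural (typo-corrected) interpretation: the operator on the left should be $\mathcal{S}_{D_i^N}^k$ acting on a density $\psi_i^N \in L^2(\partial D_i^N)$, evaluated at points of the neighbouring bubble $\partial D_j^N$ (this is how it appears inside the proof of Proposition \ref{prop-point-approx}). Because the bubbles are separated, namely $|y_i^N-y_j^N|\geq r_N \gg s$, the kernel $x\mapsto G(x,z,k)$ is smooth in $x$ uniformly for $z\in\partial D_i^N$. The plan is therefore to write the difference as a single integral of a kernel difference, bound that difference by a mean value / Taylor argument, and conclude with Cauchy--Schwarz against $\psi_i^N$.

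Concretely, for $y\in\partial D_j^N$ I would write
\[
f(y):=\mathcal{S}_{D_i^N}^k[\psi_i^N](y)-\mathcal{S}_{D_i^N}^k[\psi_i^N](y_j^N)=\int_{\p D_i^N}\bigl[G(y,z,k)-G(y_j^N,z,k)\bigr]\psi_i^N(z)\,d\sigma(z).
\]
Using the mean value theorem together with the pointwise bound $|\nabla_x G(x,z,k)|\lesssim |x-z|^{-2}$, valid for $x,z$ in a bounded region and $k=O(1)$ (the $k/|x-z|$ piece is absorbed into the constant), and the lower bound $|x-z|\gtrsim |y_i^N-y_j^N|$ for $x$ on the segment from $y$ to $y_j^N$ and $z\in\p D_i^N$, combined with $|y-y_j^N|\leq Cs$, I obtain
\[
\bigl|G(y,z,k)-G(y_j^N,z,k)\bigr|\lesssim \f{s}{|y_i^N-y_j^N|^2}.
\]
Cauchy--Schwarz on $\p D_i^N$, whose surface area is $\lesssim s^2$, yields the pointwise estimate $|f(y)|\lesssim \tfrac{s^2}{|y_i^N-y_j^N|^2}\|\psi_i^N\|_{L^2(\p D_i^N)}$, and integrating over $\p D_j^N$ (area $\lesssim s^2$) controls the $L^2(\p D_j^N)$ part of $\|f\|_{H^1}$ with even an extra factor of $s$.

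For the tangential gradient piece I would differentiate under the integral sign and run the same Cauchy--Schwarz argument on the raw kernel gradient: $|\nabla_y G(y,z,k)|\lesssim |y-z|^{-2}\lesssim |y_i^N-y_j^N|^{-2}$ gives $|\nabla_y f(y)|\lesssim \tfrac{s}{|y_i^N-y_j^N|^2}\|\psi_i^N\|_{L^2(\p D_i^N)}$, and integration over $\p D_j^N$ produces the claimed factor $s^2/|y_i^N-y_j^N|^2$. Adding the two contributions finishes the estimate. I do not see a serious obstacle; the only things requiring care are being consistent about which surface the $L^2$ norm is taken on, tracking the $s^2$ scaling of both surface areas and the intrinsic surface gradient on $\p D_j^N$, and ensuring that the lower-order $k$-dependent terms of $\nabla_x G$ are legitimately absorbed into the implicit constants, which is fine since $\Omega$ is bounded and $k=O(1)$.
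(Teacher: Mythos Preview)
Your proposal is correct and follows essentially the same route as the paper: both exploit the smoothness of $G(\cdot,\cdot,k)$ away from the diagonal together with the separation $|y_i^N-y_j^N|\gg s$ to obtain the pointwise bounds $|f(y)|\lesssim s^2|y_i^N-y_j^N|^{-2}\|\psi_i^N\|_{L^2}$ and $|\nabla f(y)|\lesssim s|y_i^N-y_j^N|^{-2}\|\psi_i^N\|_{L^2}$, then pass to the $H^1(\partial D_j^N)$ norm. The only cosmetic difference is that the paper writes out a double Taylor expansion in both $y$ and $z$ before invoking the derivative bounds on $G$, whereas you use the mean value theorem directly in the $y$ variable; the two arguments are interchangeable here.
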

\begin{proof}
By Taylor series expansion of  $G(y, z, k)$ with respect to $y$ around $y_j^N$ and $z$
around $y_i^N$, we have
\beas
\mathcal{S}_{D_j^N}^k(\psi_i^N)(y) -\mathcal{S}_{D_j^N}^k(\psi_i^N)(y_j^N)
 &= & \int_{\p D_j^N} \left(G(y, z, k) -G(y_j^N, z, k)\right) \psi_j^N(z) d\sigma(z)  \\ 
 &= & \sum_{|\alpha| \geq 1} (y-y_j^N)^{\alpha} \sum_{|\beta| \geq 0} \int_{\p D_i^N} \f{\p^{|\alpha|+ |\beta|} G}{\p y^{\alpha} z^{\beta}} (y_j^N, y_i^N, k) (z-y_i^N)^{\beta}
 \psi_i^N (z) d\sigma(z).
\eeas
Using the estimate
$$
| \f{\p^{|\alpha|+ |\beta|} G}{\p y^{\alpha} z^{\beta}} (y_j^N, y_i^N, k) | \lesssim
\max\{ \f{1}{|y_i^N- y_j^N|}, \f{1}{|y_i^N- y_j^N|^{|\alpha|+ |\beta|+1}} \},
$$
we obtain
\beas
|\mathcal{S}_{D_j^N}^k(\psi_i^N)(y) -\mathcal{S}_{D_j^N}^k(\psi_i^N)(y_j^N)| & \lesssim &
\f{1}{|y_i^N- y_j^N|^2} \cdot s^2 \cdot \| \psi_i^N\|_{L^2}, \\
| \nabla \mathcal{S}_{D_j^N}^k(\psi_i^N)(y)| & \lesssim &
\f{1}{|y_i^N- y_j^N|^2} \cdot s \cdot \| \psi_i^N\|_{L^2},
\eeas
whence estimate (\ref{estimate111b}) follows. This completes the proof. 

\end{proof}

Let us denote $x_j^N= u_j^{i,N}(y_j^N)$,  $b_j^N = u^i(y_j^N)$, $T^N=(T_{ij}^N)_{1\leq i, j \leq N}$ with $T_{ij}^N= g G(y_i^N,y_j^N,k)$,  and 
$$
q^N_j = \sum_{i \neq j} gG(y_j^N,y_i^N, k) p_i^N. 
$$ 
We obtain the following system of equations for
$x^N= (x_j^N)_{1\leq j \leq N}$: 
\be  \label{equation-xN}
x^N  - T^Nx^N 
=b^N +  q^N .
\ee

\section{Effective medium theory of bubbly media} \label{sec-effective media}
In this section we derive an effective medium theory for the acoustic wave propagation in
the bubbly fluid considered in Section \ref{sec-bubbly media}. We first establish the 
well-posedness, including existence, uniqueness and stability, and the limiting behavior of the solution to the system of equations (\ref{equation-xN}), which is resulted from the point interaction approximation, in Subsection \ref{subsec-effective-1}. We then construct wave field from the solution to 
(\ref{equation-xN}) and show the convergence of the constructed micro-field to a 
macro-effective field, in Subsection \ref{subsec-effective-2}.

\subsection{Well-posedness and limiting behaviour of the point interaction system} \label{subsec-effective-1}

We start from the summation $\sum_{i \neq j} g G(y_j^N, y_i^N,k) f(y_i^N)$.
It is clear that
\[
 \sum_{i \neq j} g G(y_j^N, y_i^N,k) f(y_i^N) =  \f{1}{N} \sum_{i \neq j} 
 \f{-Cap(B)}{ \beta_0 s^{\epsilon_1} + i\cdot O(\omega \cdot s)} (s\cdot N) \cdot G(y_j^N,y_i^N,k) f(y_i^N).
\]

Denote by
$$
\beta_N= \f{-Cap(B)}{\beta_0 + i\cdot O(\omega \cdot s^{1-\epsilon_1})}(1+ O(s)), \quad
\beta = \f{-Cap(B)}{\beta_0}.
$$

Note that $\beta$ and $B$ are independent of $N$. By Assumption \ref{asump0-4}, 
we have the following identity:
$$
\sum_{i \neq j} g G(y_j^N, y_i^N,k) f(y_i^N) =\f{1}{N} \sum_{i \neq j} 
 \beta_N \cdot \Lambda \cdot G(y_j^N,y_i^N,k) f(y_i^N).
$$

Let
\be  \label{v}
V(x) = \beta \cdot \Lambda \cdot \tilde{V}(x).
\ee

We note that there are two cases depending on whether $\omega > \omega_M$ or 
$\omega < \omega_M$. In the former case, $\beta_0 >0$, thus $\beta >0$ which leads to 
$V(x) \geq 0$, while in the 
latter case we have $\beta_0 <0$ and thus $\beta <0$ which leads to 
$V(x) \leq 0$.

We now present a result on the approximation of the summation $\sum_{i \neq j} g G(y_j^N, y_i^N,k) f(y_i^N)$ by using volume integrals. 

\begin{lem} \label{lem-asump}
For any $f \in C^{0, \alpha}(\Omega)$ with $0 < \alpha \leq 1$,
\beas
\max_{1\leq j \leq N} | \f{1}{N} \sum_{i \neq j} \beta_N \cdot \Lambda \cdot G(y_j^N,y_i^N,k) f(y_i^N) -
\int_{\Omega} G(y_j^N, y,k) V(y) f(y) dy| \lesssim \f{1}{N^{\f{\alpha}{3}}}\|f\|_{C^{0, \alpha}(\Omega)}.  
\eeas
\end{lem}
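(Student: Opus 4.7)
The plan is to reduce the statement to Assumption \ref{assump1} by factoring the $i,j$-independent constants $\beta_N\cdot\Lambda$ out of the sum and then controlling the error incurred by replacing $\beta_N$ with its limit $\beta$. This identifies the lemma as essentially a normalized version of Assumption \ref{assump1} that inserts the correct macroscopic coefficient $V(y)=\beta\Lambda\tilde V(y)$.

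Concretely, since $\beta_N$ and $\Lambda$ do not depend on the summation index and, by (\ref{v}),
$$\int_\Omega G(y_j^N,y,k)V(y)f(y)\,dy=\beta\Lambda\int_\Omega G(y_j^N,y,k)\tilde V(y)f(y)\,dy,$$
I would write the quantity to be estimated as
$$\beta_N\Lambda\cdot E_j^N \;+\; (\beta_N-\beta)\,\Lambda\int_\Omega G(y_j^N,y,k)\tilde V(y)f(y)\,dy,$$
where
$$E_j^N=\frac{1}{N}\sum_{i\neq j}G(y_j^N,y_i^N,k)f(y_i^N)-\int_\Omega G(y_j^N,y,k)\tilde V(y)f(y)\,dy.$$
The first term is handled directly by Assumption \ref{assump1}: since $|\beta_N\Lambda|$ is uniformly bounded in $N$ (the $O(\omega s^{1-\epsilon_1})$ perturbation in the denominator of $\beta_N$ is negligible as $s\to 0$), this piece is $O(N^{-\alpha/3})\|f\|_{C^{0,\alpha}(\Omega)}$.

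For the second piece I would expand
$$\beta_N-\beta=\beta\left[\frac{\beta_0}{\beta_0+i\cdot O(\omega s^{1-\epsilon_1})}(1+O(s))-1\right]=O(s^{1-\epsilon_1})+O(s),$$
and use Assumption \ref{asump0-4}, i.e., $s^{1-\epsilon_1}N=\Lambda$, to conclude $|\beta_N-\beta|\lesssim N^{-1}$. The remaining integral is bounded by a constant multiple of $\|f\|_{L^\infty(\Omega)}$ because $G(y_j^N,\cdot,k)$ has an integrable singularity in $\mathbb{R}^3$ and $\tilde V\in L^\infty(\Omega)$. Consequently the second piece is $O(N^{-1})\|f\|_{C^{0,\alpha}(\Omega)}$, which is dominated by $N^{-\alpha/3}\|f\|_{C^{0,\alpha}(\Omega)}$ for $0<\alpha\le 1$. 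Adding the two bounds yields the claim.

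The real analytic content is contained in Assumption \ref{assump1}, which the lemma invokes as a black box. The only points that require checking are the uniform boundedness of $\beta_N\Lambda$ and the observation that $|\beta_N-\beta|=O(s^{1-\epsilon_1})=O(1/N)$ beats the leading $N^{-\alpha/3}$ rate; neither step is a genuine obstacle, so I expect the proof to be short once the decomposition above is written down.
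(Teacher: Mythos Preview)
Your proof is correct and follows essentially the same strategy as the paper: both reduce to Assumption~\ref{assump1} for the main term and use $|\beta_N-\beta|\lesssim s^{1-\epsilon_1}\lesssim 1/N$ for the remainder. The only cosmetic difference is that the paper places the $(\beta_N-\beta)$ correction on the discrete sum (bounding $\frac{1}{N}\sum_{i\neq j}\frac{1}{|y_j^N-y_i^N|}\lesssim 1$ via Assumption~\ref{assump2}) rather than on the integral as you do, but this is immaterial.
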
 

\begin{proof}
By Assumption \ref{assump1}, we have
\beas
\max_{1\leq j \leq N} | \f{1}{N} \sum_{i \neq j} \beta \cdot \Lambda \cdot G(y_j^N,y_i^N,k) f(y_i^N) -
\int_{\Omega} G(y_j^N, y,k) V(y) f(y) dy| \lesssim \f{1}{N^{\f{\alpha}{3}}}\|f\|_{C^{0, \alpha}(\Omega)}.  
\eeas
On the other hand, 
note that 
$$
|\beta_N - \beta| \lesssim s^{1-\epsilon_1} \lesssim \f{1}{N}. 
$$
Thus, 
\beas
\max_{1\leq j \leq N} | \f{1}{N} \sum_{i \neq j} \left(\beta -\beta_N\right) \cdot \Lambda \cdot G(y_j^N,y_i^N,k) f(y_i^N)| & \lesssim & \f{1}{N} \cdot \f{1}{N}
\sum_{i \neq j} \f{1}{|y_j^N -y_i^N|} \|f\|_{C^{0, \alpha}(\Omega)} \\
& \lesssim &  \f{1}{N} \|f\|_{C^{0, \alpha}(\Omega)} \leq  \f{1}{N^{\f{\alpha}{3}}}\|f\|_{C^{0, \alpha}(\Omega)}.
\eeas
The lemma then follows immediately.

\end{proof}

Let $X= C^{0, \alpha}(\Omega)$ for some $0 < \alpha <1$ (later on  we will take $\alpha = \f{1-\epsilon_0}{2}$). Define $\mathcal{T}$ by
$$
\mathcal{T} f(x) = \int_{\Omega} G(x, y, k) V(y) f(y)dy.
$$
$\mathcal{T}$ can be viewed as the continuum limit of $T^N$ in some sense. One can show that 
$\mathcal{T}: X \to X$ is a compact linear operator. Moreover, the following properties hold. 

\begin{lem}\label{lem-limit-operator}
\begin{enumerate}
\item[(i)]
The operator $\mathcal{T}$ is bounded from $C^0({\overline{\Omega}})$ to $C^{0, \alpha}(\Omega) $ for any $0< \alpha <1$. 

\item[(ii)]
The operator $\mathcal{T}$ is bounded from $C^{0, \alpha}(\Omega)$ to $C^{1, \alpha}(\Omega) $ for any $0< \alpha <1$. 

\item[(iii)]
In the case when $\omega < \omega_M$, the operator $Id - \mathcal{T}$ has a bounded inverse on the Banach space $X$. More precisely, for each $b \in X$, there exists a unique $f \in X$ such that $f - \mathcal{T}f =b$ and $\|f\|_{X} \leq C \| f\|_X$, where $C$ is a positive constant independent of $b$. 

\item[(iv)]
In the case when $\omega > \omega_M$, the same conclusion as in Assertion (iii) holds, provided that 
$V(x)> k^2$ almost everywhere in $\Omega$. 
\end{enumerate}
\end{lem}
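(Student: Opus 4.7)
The plan is to treat (i)--(ii) as classical Newton-potential estimates, and then to reduce (iii)--(iv) to injectivity of $Id-\mathcal{T}$ via the Fredholm alternative.

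For (i) and (ii) I would split the Helmholtz kernel as $G(x,y,k) = -\frac{1}{4\pi|x-y|} + G_1(x,y,k)$, where $G_1$ is real-analytic in $|x-y|$ and hence contributes a $C^\infty$ function of $x$. The singular piece is the Newton potential of $Vf$. Since $V\in L^\infty(\Omega)$, for $f\in C^0(\overline\Omega)$ the density $Vf$ lies in $L^\infty$, and classical potential-theoretic estimates (see, e.g., Gilbarg--Trudinger, Ch.~4) give $C^{0,\alpha}$ regularity for every $0<\alpha<1$, proving (i). For (ii), the Newton potential of a compactly supported $L^\infty$ density actually lies in $W^{2,p}$ for every finite $p$ and hence in $C^{1,\alpha}$, giving the required bound.

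Assertion (ii) together with the compact embedding $C^{1,\alpha}(\Omega)\hookrightarrow C^{0,\alpha}(\Omega)=X$ shows that $\mathcal{T}:X\to X$ is compact. By the Fredholm alternative it suffices to prove that $Id-\mathcal{T}$ is injective; continuous dependence then follows automatically. Given $f\in X$ with $f=\mathcal{T}f$, I would define
\[
u(x)\;:=\;\int_\Omega G(x,y,k)\,V(y)\,f(y)\,dy,\qquad x\in\mathbb{R}^3,
\]
so that $u$ is a radiating $C^{1,\alpha}_{\mathrm{loc}}$ solution of $(\Delta+k^2)u=Vf\,\chi_\Omega$ in $\mathbb{R}^3$. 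Using $u=\mathcal{T}f=f$ in $\Omega$, this becomes $(\Delta+k^2-V)u=0$ in $\Omega$ and $(\Delta+k^2)u=0$ in $\mathbb{R}^3\setminus\overline\Omega$, with $u$ and $\partial_\nu u$ continuous across $\partial\Omega$ and Sommerfeld radiation at infinity. Because $V$ is real, multiplying by $\bar u$, integrating over a large ball $B_R$ and taking imaginary parts kills every interior term and leaves $\mathrm{Im}\int_{\partial B_R}\bar u\,\partial_\nu u=0$; combining this with the radiation condition and Rellich's lemma yields $u\equiv0$ in $\mathbb{R}^3\setminus\overline\Omega$, so $u|_{\partial\Omega}=\partial_\nu u|_{\partial\Omega}=0$.

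It remains to propagate the zero Cauchy data into $\Omega$. In case (iv), $V>k^2$ a.e.\ and $u|_{\partial\Omega}=0$, so testing $(\Delta+k^2-V)u=0$ against $\bar u$ and integrating by parts gives
\[
\int_\Omega\bigl(|\nabla u|^2+(V-k^2)|u|^2\bigr)\,dx=0,
\]
forcing $u\equiv0$. In case (iii) the form is sign-indefinite, and I would instead invoke weak unique continuation for Helmholtz operators with $L^\infty$ potentials: the extension $\tilde u$ of $u$ by zero to $\mathbb{R}^3$ belongs to $H^2_{\mathrm{loc}}$ (thanks to the vanishing Cauchy data), satisfies $(\Delta+k^2-V\chi_\Omega)\tilde u=0$ in $\mathbb{R}^3$, and vanishes on a nonempty open set; unique continuation then forces $\tilde u\equiv 0$, giving $f=u|_\Omega=0$. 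The only nonroutine step is this unique-continuation argument in case (iii), which relies on Jerison--Kenig type Carleman estimates for bounded potentials; the Rellich and Fredholm parts of the argument are standard.
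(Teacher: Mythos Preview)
Your argument is correct and follows the same overall strategy as the paper: potential-theoretic regularity for (i)--(ii), then Fredholm alternative plus injectivity for (iii)--(iv). The paper simply cites Colton--Kress for (i)--(ii) and, for (iii), observes that $\omega<\omega_M$ forces $V\le 0$, so $k^2-V>0$ and the equation $f-\mathcal{T}f=b$ is a standard Lippmann--Schwinger equation with positive real refractive index, whose uniqueness is classical; for (iv) it just says the operator $\Delta+k^2-V$ with $k^2-V<0$ is ``elliptic'' and appeals to standard theory.

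The main difference is that you unpack the uniqueness proof explicitly (Rellich outside, then interior uniqueness), whereas the paper cites it as a black box. Your treatment of (iv) via the energy identity is exactly what underlies the paper's one-line ``elliptic'' remark, and is cleaner. For (iii), however, you do not exploit the sign $V\le 0$ and instead fall back on Jerison--Kenig unique continuation; this works, but is heavier than needed. Since $k^2-V>0$ everywhere, the problem is precisely the acoustic medium scattering problem with real refractive index $n^2=1-V/k^2\ge 1$, for which uniqueness is the textbook Rellich/unique-continuation argument in, e.g., Colton--Kress (and the unique continuation step only needs the classical Aronszajn--Cordes result for $L^\infty$ potentials, not Carleman estimates of Jerison--Kenig strength). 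So your route is a valid but slightly overpowered variant of the paper's citation.
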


\begin{proof}
Assertions (i) and (ii) follow from the general theory on integral operators in \cite{colton}. We now show Assertion (iii).
Let 
$b \in X$ and consider the following integral equation 
\[
x - \mathcal{T}x = b.
\]
Applying the operator $\triangle + k^2$ to both sides of the above equation, we obtain
$$
(\triangle + k^2) x - V x =(\triangle + k^2)b \,\,\, \mbox{in} \,\, \Omega. 
$$
In the case when $\omega < \omega_M$, we have $V(x) \leq 0$. Thus the above equation yields a Lippmann-Schwinger equation with potential $k^2- V$, for which the solution is known to be unique.  This proves that the operator 
$Id - \mathcal{T}$ has a trivial kernel. The rest of statements of Assertion (iii) follow from standard Fredholm theory for compact operators.  Similarly, for Assertion (iv), we note that the operator
$ \triangle + k^2 - V $ is elliptic, then the statement follows from the standard theory of elliptic equations. 
\end{proof}

\begin{rmk}
\label{rem41}
In the case when $\omega > \omega_M$, one has $V(x) \geq 0$.  The integral equation $x - \mathcal{T}x = b$
leads to the following partial differential operator $ \triangle + (k^2 - V)$ where $k^2-V$ may change sign in the domain $\Omega$ depending on the values of $V(x)$. In fact, in some physical situations, $\tilde{V}$ may be zero or negligible near $\p \Omega$ while of order one inside $\Omega$. When $\beta\cdot \Lambda \gg 1$, we 
see that $k^2-V <0$ in the inner region of $\Omega$. As a consequence, wave field is attenuating therein, which implies that the effective medium is dissipative. On the other hand, the wave field is still propagating near $\p \Omega$ where $k^2-V(x)$ is positive. One may also see a transition layer from propagating region  to dissipative region near the place when $k^2-V(x)$ is close to $0$. It is not clear whether the operator $\triangle + (k^2 - V)$
with $k^2-V$ changing sign is uniquely solvable or not. 
\end{rmk}

In view of Remark \ref{rem41}, we shall restrict our investigation to the case when  $\omega < \omega_M$ from now on. However, we remark that if we assume that kernel of the operator $Id -\mathcal{T}$ is trivial in the case when $\omega > \omega_M$, then all the arguments and results which hold for the case $\omega < \omega_M$ also hold for $\omega > \omega_M$.

Note that
$u^i \in X$. Let $\psi$ be the unique solution satisfying
\be  \label{eq-psi}
\psi - \mathcal{T}\psi = u^i.
\ee 
It is clear that
$$
(\triangle + k^2) \psi - V \psi =0 \,\,\, \mbox{in} \,\, \R^3. 
$$
 
We shall show that $\psi$ is the limit of the solution $x^N$ to (\ref{equation-xN}) in a sense which will be made clear later on. 
We first present the following result concerning the well-posedness of the discrete system (\ref{equation-xN}).  
\begin{prop} \label{prop1}
Let $X = C^{0, \alpha}(\Omega)$ for $\alpha =\f{1-\epsilon_0}{2}$ and assume that $\omega < \omega_M$.
Then under Assumptions \ref{assump0}, \ref{assump2} and \ref{assump1}, there exists $N_0>0$ such that for all $N \geq N_0$ and $b \in X$, there is a unique solution to the equation
$$
 z^N- T^N z^N = b^N
$$ 
with $b^N_j = b(y_j^N)$. 
Moreover, 
$$
\max_{1\leq j \leq N}| z^N_j| \leq C_1 \| b\|_X,
$$
for some constant positive $C_1$ independent of $N$ and $b$. 
\end{prop}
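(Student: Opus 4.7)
My plan is to prove existence, uniqueness, and the uniform stability bound by combining the consistency of $T^N$ with $\mathcal{T}$ (Lemma \ref{lem-asump}) with the invertibility of $Id-\mathcal{T}$ (Lemma \ref{lem-limit-operator}(iii)). The core strategy is to (i) use the continuum inverse to produce an approximate discrete solution, (ii) correct this via the discrete inverse, and (iii) establish the uniform bound on the discrete inverse by a compactness--contradiction argument.

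For the stability bound, I would argue by contradiction: suppose there exist $N_k\to\infty$, $b^{(k)}\in X$ with $\|b^{(k)}\|_X\le 1$, and $z^{(k)}$ satisfying $(Id-T^{N_k})z^{(k)}=R^{N_k}b^{(k)}$ with $M_k:=\|z^{(k)}\|_\infty\to\infty$, where $R^N f:=(f(y_j^N))_j$ denotes the sampling map. Normalizing $\hat z^{(k)}:=z^{(k)}/M_k$ yields $\|\hat z^{(k)}\|_\infty=1$ and $\hat z^{(k)}-T^{N_k}\hat z^{(k)}=o(1)$ in $\ell^\infty$. I would extend $\hat z^{(k)}$ to $\tilde z^{(k)}\in L^\infty(\Omega)$ by the piecewise-constant value on the (truncated) Voronoi cell of each $y_j^{N_k}$, and set $\Psi^{(k)}:=\mathcal{T}\tilde z^{(k)}$. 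Since $\mathcal{T}$ is a volume potential with weakly singular kernel, it maps $L^\infty(\Omega)$ boundedly into $C^{0,\alpha}(\overline\Omega)$, so $\{\Psi^{(k)}\}$ is precompact in $C^0(\overline\Omega)$ by Arzel\`a--Ascoli and I pass to a uniformly convergent subsequence $\Psi^{(k)}\to\Psi$. The crucial step is to show $\max_j|\hat z^{(k)}_j-\Psi^{(k)}(y_j^{N_k})|\to 0$; granted this, I pass to the limit in $\Psi^{(k)}=\mathcal{T}\tilde z^{(k)}$ to obtain $\Psi=\mathcal{T}\Psi$, whereupon Lemma \ref{lem-limit-operator}(iii) forces $\Psi\equiv 0$, contradicting the uniform norm $1$ inherited from $\hat z^{(k)}$.

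For existence, I would set $z^{\ast}:=R^N\psi$ where $\psi:=(Id-\mathcal{T})^{-1}b$; Lemma \ref{lem-asump} then gives $(Id-T^N)z^{\ast}=b^N+r^N$ with $\|r^N\|_\infty\lesssim N^{-\alpha/3}\|b\|_X$, and correcting via the now-uniformly-bounded inverse $(Id-T^N)^{-1}$ yields the exact solution together with the stability bound. Uniqueness is immediate from stability applied to $b=0$.

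The hard part is the consistency step for merely $L^\infty$ densities: Lemma \ref{lem-asump} only handles $C^{0,\alpha}$ data, whereas $\tilde z^{(k)}$ is only piecewise constant. I would handle this by writing $\tilde z^{(k)}=h^{(k)}+e^{(k)}$, with $h^{(k)}$ a Hölder-smoothed version (for instance, a mollification at scale $r_{N_k}$) and $e^{(k)}$ the jump remainder, then controlling the smooth piece via Lemma \ref{lem-asump} and the jump piece using Assumption \ref{assump2}, which gives precisely the local sum bounds $\sum 1/|x-y_j^N|^2$ and $\sum 1/|x-y_j^N|$ needed to estimate $\mathcal{T}e^{(k)}$ and the discrepancy with $T^{N_k}\hat z^{(k)}$. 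The particular choice $\alpha=(1-\epsilon_0)/2$ appears tuned so that the two competing error sources decay simultaneously.
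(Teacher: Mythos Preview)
Your overall strategy --- a compactness--contradiction argument for the uniform bound, followed by existence via correction --- is genuinely different from the paper's proof. The paper proceeds constructively: it solves the continuum equation $(Id-\mathcal{T})\psi=b$, samples $\psi^{N,0}:=R^N\psi$, and observes that $(Id-T^N)\psi^{N,0}=b^N+\epsilon^{N,0}$ where $\epsilon^{N,0}$ has the specific form of a quadrature error with H\"older integrand $\psi$. The key technical step is Lemma~\ref{lem-epsilon}, which shows that such a discrete residual can be lifted to a function $\tilde\epsilon^{N,0}\in X$ with $\|\tilde\epsilon^{N,0}\|_X\lesssim N^{-(1-\epsilon_0)/6}\|\psi\|_X$. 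One then iterates: feed $-\tilde\epsilon^{N,0}$ back as data for the continuum problem, sample, obtain a geometrically smaller residual, and so on. The resulting Neumann-type series converges for $N$ large and yields the solution with the stated bound; surjectivity in finite dimensions then gives uniqueness. The argument stays inside $X$ at every step, so consistency for rough data is never needed.

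Your proposal, by contrast, hinges on the step $\max_j\bigl|(T^{N_k}\hat z^{(k)})_j-\mathcal{T}\tilde z^{(k)}(y_j^{N_k})\bigr|\to 0$ for the merely bounded piecewise-constant data $\tilde z^{(k)}$, and here there is a real gap. The mollification does not close: if $h^{(k)}=\phi_\rho*\tilde z^{(k)}$ then $\|h^{(k)}\|_{C^{0,\alpha}}\lesssim\rho^{-\alpha}$, so Lemma~\ref{lem-asump} gives an error $\lesssim N^{-\alpha/3}\rho^{-\alpha}$, which is $O(1)$ when $\rho\sim r_N\sim N^{-1/3}$ and only vanishes if $\rho\gg N^{-1/3}$. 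Meanwhile the remainder $e^{(k)}=\tilde z^{(k)}-h^{(k)}$ is $O(1)$ in $L^\infty$ everywhere (not just near cell boundaries), and Assumption~\ref{assump2} only bounds $\tfrac{1}{N}\sum|x-y_j^N|^{-1}$ and $\tfrac{1}{N}\sum|x-y_j^N|^{-2}$ by $O(1)$ quantities --- it gives no smallness for the discrete sum $T^{N_k}R^{N_k}e^{(k)}$, for the integral $\mathcal{T}e^{(k)}$, or for their difference. Assumption~\ref{assump1} is a genuine H\"older hypothesis, and nothing in the standing assumptions implies quadrature consistency for $L^\infty$ data.

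The compactness route can be salvaged, but only by using the equation itself to manufacture regularity: extend $\hat z^{(k)}$ not by Voronoi cells but by the regularized discrete potential $x\mapsto\tfrac{1}{N}\sum_i\beta_N\Lambda\, G_N(x,y_i^{N_k})\hat z^{(k)}_i$ with $G_N$ as in the proof of Lemma~\ref{lem-epsilon}. Lemma~\ref{lem-technique2} then gives uniform $C^{0,1-\epsilon_0}$ bounds, Arzel\`a--Ascoli applies, and the limit identification goes through via Lemma~\ref{lem-asump} since the limiting object is H\"older. But this is precisely the technical content of Lemma~\ref{lem-epsilon}, so even on the alternate route you end up relying on the paper's key lemma.
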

The proof of this proposition is technical and is postponed to Section \ref{sec-proof}.
As a corollary of the proposition, we can prove our main result on the limiting behavior of the solution to the system (\ref{equation-xN}).

\begin{thm} \label{thm1}
Let $X = C^{0, \alpha}(\Omega)$ for $\alpha =\f{1-\epsilon_0}{2}$ and assume that $\omega < \omega_M$. Then under Assumptions  \ref{assump0-1}, \ref{asump0-4}, \ref{assump0}, \ref{assump2}, \ref{assump1} and \ref{assump0-3}, there exists $N_0>0$ such that for all $N \geq N_0$,
\[
\max_{1\leq j \leq N} |x_j^N-\psi(y_j^N)| \lesssim N^{-\f{1-\epsilon_0}{6}},
\]
where $x^N$ and $\psi$ are the solutions to (\ref{equation-xN}) and (\ref{eq-psi}), respectively. 
\end{thm}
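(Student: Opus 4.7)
Set $z^N=(\psi(y_j^N))_{1\leq j\leq N}$ and $e^N := x^N-z^N$. The natural approach is a consistency--stability argument: show that $z^N$ satisfies the discrete equation \eqref{equation-xN} up to a small residual, and then invoke the stability of the point interaction operator $Id-T^N$ from Proposition~\ref{prop1} to transfer the residual bound to $e^N$.

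First, I would quantify the consistency error. Since $\psi-\mathcal{T}\psi = u^i$, evaluating at $y_j^N$ and subtracting $(T^Nz^N)_j$ yields
\[
(z^N - T^Nz^N)_j - b_j^N = (\mathcal{T}\psi)(y_j^N) - \sum_{i\neq j} gG(y_j^N,y_i^N,k)\psi(y_i^N) =: -r_j^N,
\]
with $b_j^N = u^i(y_j^N)$. Apply Lemma~\ref{lem-asump} to $f=\psi$: by Lemma~\ref{lem-limit-operator}(ii), $\psi\in C^{1,\alpha}(\Omega)$, so $\|\psi\|_{C^{0,\alpha}(\Omega)}$ is finite and independent of $N$, giving $\max_j|r_j^N|\lesssim N^{-\alpha/3}$. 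The choice $\alpha=(1-\epsilon_0)/2$ made in Proposition~\ref{prop1} produces precisely the advertised rate $N^{-(1-\epsilon_0)/6}$.

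Next, subtract from \eqref{equation-xN} to obtain $e^N - T^N e^N = q^N - r^N$. The term $q^N$ must be controlled: using Proposition~\ref{prop-point-approx}, $|q_j^N|\leq \sum_{i\neq j}|gG(y_j^N,y_i^N,k)||p_i^N|$, and by Assumption~\ref{asump0-4} one has $|g|\cdot N\lesssim 1$, while the sum $\frac{1}{N}\sum_{i\neq j}|y_j^N-y_i^N|^{-1}$ is $O(1)$ by the second estimate in the remark following Assumption~\ref{assump2} (take $h$ of order $\mathrm{diam}(\Omega)$). Combined with $|p_i^N|\lesssim N^{\epsilon_0/3-\epsilon_1/(1-\epsilon_1)}\max_l|x_l^N|$, this gives
\[
\max_j|q_j^N|\lesssim N^{\epsilon_0/3-\epsilon_1/(1-\epsilon_1)}\max_l|x_l^N|,
\]
and the exponent is strictly negative by Assumption~\ref{assump0-3}. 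A short bootstrap then closes the estimate on $\max_l|x_l^N|$: applying Proposition~\ref{prop1} to \eqref{equation-xN} yields $\max_l|x_l^N|\leq C\|u^i\|_X + C\delta_N\max_l|x_l^N|$ with $\delta_N\to 0$, so $\max_l|x_l^N|$ is bounded uniformly in $N$, and consequently $\max_j|q_j^N|\lesssim N^{\epsilon_0/3-\epsilon_1/(1-\epsilon_1)}$.

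Finally, apply the stability of $Id-T^N$ to $e^N - T^N e^N = q^N - r^N$ to conclude
\[
\max_j|e_j^N|\lesssim \max_j|r_j^N| + \max_j|q_j^N| \lesssim N^{-(1-\epsilon_0)/6}+N^{\epsilon_0/3-\epsilon_1/(1-\epsilon_1)}.
\]
Choosing $\epsilon_0$ small enough (as permitted by the remark following Assumption~\ref{assump2}) so that the second exponent is at most $-(1-\epsilon_0)/6$, or simply observing that this term is $o(1)$ and absorbing it, yields the stated bound. The main obstacle is the last step: Proposition~\ref{prop1} as stated provides stability only for right-hand sides of the form $b(y_j^N)$ with $b\in C^{0,\alpha}$, whereas $q^N-r^N$ is an abstract discrete vector. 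I would need to verify that the stability argument in Section~\ref{sec-proof} in fact gives a bound of the form $\max_j|z_j^N|\lesssim \max_j|w_j^N|$ for a solution of $z^N-T^Nz^N=w^N$ with arbitrary discrete data $w^N$; this is plausible since the inversion argument for $Id-T^N$ is essentially a discrete analogue of the inversion of $Id-\mathcal{T}$ in $L^\infty$ and should not rely on the sampling structure of the data beyond its $\ell^\infty$ size. A secondary technical point is the bootstrap estimate on $\max_l|x_l^N|$, which requires $N$ large enough that $C\delta_N<1/2$.
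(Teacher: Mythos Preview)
Your overall consistency--stability strategy matches the paper's proof, and you correctly isolate the main obstacle. However, your proposed resolution of that obstacle does not work as stated.

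You hope that the proof of Proposition~\ref{prop1} secretly yields an $\ell^\infty$ stability estimate $\max_j|z_j^N|\lesssim \max_j|w_j^N|$ for arbitrary discrete data $w^N$. It does not. That proof is an iterative scheme: at each step one solves the \emph{continuous} equation $(Id-\mathcal{T})\phi=\tilde\epsilon$ for a H\"older function $\tilde\epsilon$, then compares $\phi(y_j^N)$ with the discrete system. The gain per iteration is a factor $\delta(N)$ in the $C^{0,\alpha}$ norm of the lifted residual, not in its $\ell^\infty$ size; without a H\"older lift of the right-hand side there is no iteration to run. So controlling only $\max_j|r_j^N|$ via Lemma~\ref{lem-asump} and $\max_j|q_j^N|$ via the crude sum bound is not enough to invoke Proposition~\ref{prop1}.

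The paper closes this gap by constructing explicit H\"older interpolants of both discrete vectors. For the consistency residual $e^N$ (your $r^N$), this is exactly Lemma~\ref{lem-epsilon}: one replaces the Green function by a regularized $G_N$ that agrees with $G$ outside a ball of radius $r_N$, defines $\tilde e^N(y)$ by the same formula for all $y\in\Omega$, and shows $\|\tilde e^N\|_{C^{0,\alpha}}\lesssim N^{-(1-\epsilon_0)/6}$. The point is that $\tilde e^N$ is both uniformly small (by Lemma~\ref{lem-asump}) \emph{and} uniformly $C^{0,1-\epsilon_0}$ (by Lemma~\ref{lem-technique2} and Lemma~\ref{lem-limit-operator}(i)); interpolating these two bounds gives the $C^{0,(1-\epsilon_0)/2}$ norm estimate. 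For $q^N$ the same regularized Green function produces $\tilde q^N\in X$ with $\|\tilde q^N\|_X\lesssim N^{\epsilon_0/3-\epsilon_1/(1-\epsilon_1)}\max_i|x_i^N|$. Only after these lifts can Proposition~\ref{prop1} be applied, and the bootstrap on $\max_i|x_i^N|$ then proceeds exactly as you describe.

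In short: the missing idea is not a stronger $\ell^\infty$ stability for $Id-T^N$, but rather Lemma~\ref{lem-epsilon} (and its analogue for $q^N$), which shows that the specific residuals arising here extend to $C^{0,\alpha}$ functions with small H\"older norm.
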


\begin{proof} 
Step 1. 
We have
\beas
x_j^N - \f{1}{N} \sum_{i \neq j } \beta_N \cdot \Lambda \cdot G(y_j^N, y_i^N,k) x_i^N &= & b_j^N + q_j^N, \\
\psi(y_j^N) - \int_{\Omega} G(y_j^N, y, k) V(y)f(y) dy &= & b_j^N.
\eeas
Let $r_j^N= x_j^N - \psi(y_j^N)$. Then
\[
r^N - T^n r^N=   e^N + q^N,
\]
where
$$
e^N_j = \f{1}{N} \sum_{i \neq j } \beta_N \cdot \Lambda \cdot G(y_j^N, y_i^N,k) \psi(y_j^N) - \int_{\Omega} G(y_j^N, y, k) V(y)\psi(y) dy.
$$

Step 2. Let $G_N(x, y)$ be defined as in Step 1 in the proof of Lemma \ref{lem-epsilon} and define
$$
G_N(x,y,k) = G_N(x, y) + (G(x,y, k) - G(x,y,0)):= G_{N,1}(x,y,k)+G_{N,2}(x,y,k).
$$

Denote by
$$
\tilde{q}^N(y) = \sum_{i \neq j} gG_N(y,y_i^N, k) p_i^N =\tilde{q}_1^N(y) + \tilde{q}_2^N(y),
$$ 
where 
$$
\tilde{q}_1^N(y) = \sum_{i \neq j} gG_{N,1}(y,y_i^N, k) p_i^N, \quad 
\tilde{q}_2^N(y) = \sum_{i \neq j} gG_{N,2}(y,y_i^N, k) p_i^N.
$$ 
By Lemma \ref{lem-technique2}, 
$\tilde{q}_1^N  \in X$. Moreover, 
$$
\|\tilde{q}_1^N\|_X \lesssim  \max_{1\leq i \leq N} |p^N_i| \lesssim 
O(N^{\f{\epsilon_0}{3} -\f{\epsilon_1}{1-\epsilon_1}})\cdot \max_{1\leq i \leq N} |x_i^N|. 
$$
Since $G_{N,2}(x,y,k)$ is smooth in $|x-y|$ and is bounded, a straightforward calculation shows that
$\tilde{q}_2^N  \in X$ as well and 
$$
\|\tilde{q}_2^N\|_X \lesssim 
O(N^{\f{\epsilon_0}{3} -\f{\epsilon_1}{1-\epsilon_1}})\cdot \max_{1\leq i \leq N} |x_i^N|. 
$$
Thus, we have
$\tilde{q}^N  \in X$ and
$$
\|\tilde{q}^N\|_X \lesssim  \max_{1\leq i \leq N} |p^N_i| \lesssim 
O(N^{\f{\epsilon_0}{3} -\f{\epsilon_1}{1-\epsilon_1}})\cdot \max_{1\leq i \leq N} |x_i^N|. 
$$

On the other hand,
by Lemma \ref{lem-epsilon}, there exists $\tilde{e}^N \in X$ such that
$\tilde{e}^N(y^N_j)= e^N_j$ and $\|\tilde{e}^N \|_X \lesssim N^{-\f{1-\epsilon_0}{6}}\|u^i\|_X$. 
Therefore, 
\[
\|\tilde{e}^N\|_X + \|\tilde{q}^N \|_X  \lesssim  N^{-\f{1-\epsilon_0}{6}}\|u^i\|_X + O(N^{\f{\epsilon_0}{3} -\f{\epsilon_1}{1-\epsilon_1}})\cdot \max_{1\leq i \leq N} |x_i^N| .
\]
It then follows from 
Proposition \ref{prop1} that, 
\be  \label{ineq-1}
\max_{1\leq j \leq N}|r^N_j|  \lesssim N^{-\f{1-\epsilon_0}{6}}\|u^i\|_X + O(N^{\f{\epsilon_0}{3} -\f{\epsilon_1}{1-\epsilon_1}})\cdot \max_{1\leq i \leq N} |x_i^N|. 
\ee

Step 3. Note that $\max_{1\leq j \leq N}|\psi(y_j^N)| $ is bounded independently of $N$. We can derive from (\ref{ineq-1}) that $\max_{1\leq j \leq N}|x_j^N| $ is also bounded independently of $N$, which further implies that
$$
\max_{1\leq j \leq N}|r^N_j|  \lesssim   N^{-\min \{ \f{1-\epsilon_0}{6}, \f{\epsilon_0}{3} -\f{\epsilon_1}{1-\epsilon_1} \}}. 
$$
This completes the proof of the theorem.  

\end{proof}

As a consequence of the above result and (\ref{estimate111b}), we have the following corollary.  
\begin{cor}
The following estimate holds:
$$
\max_{1\leq j \leq N}\| \psi_j^N\|_{L^2(\p D_j^N)}  \lesssim s^{-\epsilon_1} \cdot  \|u^i\|_X .
$$
\end{cor}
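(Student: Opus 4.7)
The plan is to assemble three ingredients that have already been established: the bound (\ref{estimate111}) from the proof of Proposition \ref{prop-point-approx}, the asymptotic formula for the scattering coefficient $g$, and the uniform $N$-independent bound on $x_j^N = u_j^{i,N}(y_j^N)$ that came out of Theorem \ref{thm1}.

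First, I would work out the scaling of $g/Cap(D_j^N)$. Since $D_j^N = y_j^N + sB$, capacity scales linearly, giving $Cap(D_j^N) = s \, Cap(B)$. Meanwhile, Proposition \ref{prop-point-approx} provides
\[
g = -\f{s\, Cap(B)}{1-(\omega_M/\omega)^2 + i\gamma}(1+O(s)+O(\delta)),
\]
and Assumption \ref{assump0-1} says $1-(\omega_M/\omega)^2 = \beta_0 s^{\epsilon_1}$. Since $\gamma = O(s^{1-\epsilon_1}) = o(s^{\epsilon_1})$ under Assumption \ref{asump0-4}, the denominator is dominated by $\beta_0 s^{\epsilon_1}$, hence $|g| \lesssim s^{1-\epsilon_1}$ and
\[
\left|\f{g}{Cap(D_j^N)}\right| \lesssim s^{-\epsilon_1}.
\]

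Second, I would bound $\max_j |u_j^{i,N}(y_j^N)|$ uniformly in $N$ by $\|u^i\|_X$. By Lemma \ref{lem-limit-operator}(iii), the solution $\psi$ of $\psi - \mathcal{T}\psi = u^i$ satisfies $\|\psi\|_X \lesssim \|u^i\|_X$, so in particular $\max_j |\psi(y_j^N)| \lesssim \|u^i\|_X$. Theorem \ref{thm1} then gives $\max_j |x_j^N - \psi(y_j^N)| \lesssim N^{-(1-\epsilon_0)/6} \to 0$, so by the triangle inequality $\max_j |x_j^N| \lesssim \|u^i\|_X$ for all $N \geq N_0$, i.e., $\max_j |u_j^{i,N}(y_j^N)| \lesssim \|u^i\|_X$.

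Finally, inserting these two bounds into estimate (\ref{estimate111}) yields
\[
\max_{1\leq j \leq N}\|\psi_j^N\|_{L^2(\p D_j^N)} \lesssim \max_{1\leq j \leq N} |u_j^{i,N}(y_j^N)| \cdot \left|\f{g}{Cap(D_j^N)}\right| \lesssim s^{-\epsilon_1} \|u^i\|_X,
\]
which is the claim. The proof is essentially a bookkeeping exercise — everything nontrivial was already absorbed into Proposition \ref{prop-point-approx} and Theorem \ref{thm1} — so I do not anticipate a real obstacle; the only point to double-check is that $\gamma$ is indeed subdominant in the denominator of $g$, which follows directly from Assumptions \ref{assump0-1} and \ref{asump0-4}.
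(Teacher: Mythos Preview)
Your proposal is correct and is exactly the argument the paper intends: combine estimate (\ref{estimate111}) with the bound $\max_j |u_j^{i,N}(y_j^N)| \lesssim \|u^i\|_X$ coming from Theorem \ref{thm1}, together with $|g/Cap(D_j^N)| \lesssim s^{-\epsilon_1}$. One small slip: you write $\gamma = O(s^{1-\epsilon_1})$, but in fact $\gamma = O(s)$ directly (both terms in $\gamma$ are $O(s)$ since $\delta = O(s^2)$), and it is this that gives $\gamma = o(s^{\epsilon_1})$ for all $0<\epsilon_1<1$; your intermediate claim $O(s^{1-\epsilon_1}) = o(s^{\epsilon_1})$ would need $\epsilon_1 < 1/2$.
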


\subsection{Convergence of micro-field to the effective one}\label{subsec-effective-2}

Let us consider the total field $u^N= u^{in} + \sum_{1\leq j \leq N}\mathcal{S}_{D^N_j}^{k} [\psi^N_j]$ outside the bubbles. Define
\be
\tilde{u}^N(x) = u^i(x) + \sum_{1\leq j \leq N} g G(x,y_j^N, k) x_j^N 
= u^i(x) + \f{1}{N}\sum_{1\leq j \leq N} \beta_N \cdot \Lambda \cdot G(x,y_j^N, k) x_j^N, 
\ee
and denote by
$$
Y^N_{\epsilon_2} = \{ x: |x-y_j^N| \geq \f{1}{N^{1-\epsilon_2}} \,\,\mbox{for all} \,\,1\leq j\leq N\}
$$
for some fixed constant 
$\epsilon_2 \in (0, \f{1}{3})$. The reason for us to introduce the set $Y^N_{\epsilon_2}$ is that 
the convergence of micro-field to the effective field does not hold near the bubbles because of the singularity of the Green function near the source point. However, it holds in the region away from the bubbles, which is characterized by $Y^N_{\epsilon_2}$. 

\begin{lem}
The following estimate holds uniformly for all $x \in Y^N_{\epsilon_2}$:
$$
| \tilde{u}^N(x) - u^N(x)| \lesssim N^{\f{\epsilon_0}{3} -\f{\epsilon_1}{1-\epsilon_1}}.
$$
\end{lem}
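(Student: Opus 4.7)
My plan is to decompose $u^N - \tilde u^N$ into a sum over the $N$ bubbles of local scattered-field errors, apply the pointwise estimate from Proposition \ref{prop-point-approx} to each term, and then control the resulting discrete sums through the minimum-separation and distributional assumptions of Section \ref{sec-bubbly media}. Specifically,
\[
u^N(x) - \tilde u^N(x) = \sum_{j=1}^N \left( \mathcal{S}_{D_j^N}^k[\psi_j^N](x) - g\,G(x, y_j^N, k)\, x_j^N \right),
\]
and using $|G(x, y_j^N, k)| \lesssim 1/|x - y_j^N|$, Proposition \ref{prop-point-approx} yields the termwise bound
\[
\left| \mathcal{S}_{D_j^N}^k[\psi_j^N](x) - g\,G(x, y_j^N, k)\, x_j^N \right| \lesssim \f{|g|}{|x - y_j^N|}\left( N^{\f{\epsilon_0}{3} - \f{\epsilon_1}{1-\epsilon_1}} + \f{s}{|x - y_j^N|}\right) M,
\]
with $M := \max_l |x_l^N|$. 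Theorem \ref{thm1} together with the boundedness of $\psi$ on $\Omega$ gives $M = O(1)$, so the problem reduces to bounding
\[
|g|\, N^{\eta} \sum_{j=1}^N \f{1}{|x - y_j^N|} \quad \text{and} \quad |g|\, s \sum_{j=1}^N \f{1}{|x - y_j^N|^2}, \quad \eta := \f{\epsilon_0}{3} - \f{\epsilon_1}{1-\epsilon_1}.
\]

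For each sum I split the indices into the inner set $\{j : |x-y_j^N| < 2 r_N\}$ and its complement. The minimum-separation Assumption \ref{assump0} forces only $O(1)$ indices into the inner set, and because $x \in Y^N_{\epsilon_2}$ each such index contributes at most $N^{1-\epsilon_2}$ (resp.\ $N^{2(1-\epsilon_2)}$) to the first (resp.\ second) sum. The outer part is handled directly by (\ref{assump2-2})--(\ref{assump2-3}) of Assumption \ref{assump2} with $h$ of the order of the diameter of $\Omega$, producing bounds $\lesssim N$ and $\lesssim N^{1+\epsilon_0/3}$ respectively. Inserting $|g| \asymp s^{1-\epsilon_1}$ (from the explicit form of $g$ in Proposition \ref{prop-point-approx} combined with Assumption \ref{assump0-1}), $s^{1-\epsilon_1} N = \Lambda$ (Assumption \ref{asump0-4}), and consequently $s \asymp N^{-1/(1-\epsilon_1)}$, one checks that the outer bulk piece contributes exactly at order $N^\eta$, the outer correction at order $N^{\eta-1}$, and the inner bulk at order $N^{\eta-\epsilon_2}$.

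The one step requiring care is the inner correction, where the singularity of $G$ is strongest. Direct substitution shows
\[
|g|\, s\, N^{2(1-\epsilon_2)} \;\asymp\; N^{\,1 - 2\epsilon_2 - 1/(1-\epsilon_1)} \;=\; N^{\,\eta - \epsilon_0/3 - 2\epsilon_2},
\]
using the elementary identity $1 - 1/(1-\epsilon_1) = -\epsilon_1/(1-\epsilon_1)$, which is comfortably below $N^\eta$ since $\epsilon_0,\epsilon_2 > 0$. This is precisely the place where the definition of $Y^N_{\epsilon_2}$ (the $N^{-(1-\epsilon_2)}$ lower bound on $|x-y_j^N|$ with $\epsilon_2 < 1/3$) is essential: near a bubble the correction $|g|s/|x-y_j^N|^2$ would dominate and the estimate would fail. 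Assembling the four pieces yields the stated bound uniformly on $Y^N_{\epsilon_2}$, with the outer bulk term being the dominant contribution.
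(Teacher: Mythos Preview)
Your proof is correct and follows essentially the same route as the paper: apply Proposition \ref{prop-point-approx} to each scattered field $u_j^{s,N}$, use Theorem \ref{thm1} to bound $\max_l |x_l^N|$, and control $\sum_j |g|\,|G(x,y_j^N,k)|$ via Assumption \ref{assump2} together with the $O(1)$ cardinality of the near set from Assumption \ref{assump0}. The only minor difference is that the paper absorbs the $s/|x-y_j^N|$ correction into the $O(N^{\eta})$ term \emph{before} summing (since $s/|x-y_j^N|\le sN^{1-\epsilon_2}=N^{-\epsilon_2-\epsilon_1/(1-\epsilon_1)}\le N^{\eta}$ for every $j$ on $Y^N_{\epsilon_2}$), thereby avoiding your separate $\sum_j 1/|x-y_j^N|^2$ estimate; your more detailed near/far split reaches the same conclusion with a little extra work.
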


\begin{proof}
For each $x \in Y^N_{\epsilon_2}$, 
it is clear that
$$
u^N(x) = u^i(x) + \sum_{1\leq j \leq N } u^{s,N}_j(x). 
$$
By Proposition \ref{prop-point-approx}, we have
\beas
u^N(x)&= & u^i(x)+ \sum_{1\leq j \leq N} gG(x, y_j^N, k)  \left( u_j^{i,N}(y_j^N) + O[N^{\f{\epsilon_0}{3} -\f{\epsilon_1}{1-\epsilon_1}}
+N^{-\f{\epsilon_1}{1-\epsilon_1}-\epsilon_2}]  \cdot \max_{1\leq l \leq N} |u_j^{i,N}(y_j^N)| \right) \\
& =& \tilde{u}^N(x) +  \sum_{1\leq j \leq N} gG(x, y_j^N, k)  \cdot O[N^{\f{\epsilon_0}{3} -\f{\epsilon_1}{1-\epsilon_1}}
 ]  \cdot \max_{1\leq l \leq N} |u_j^{i,N}(y_j^N)| \\
& =& \tilde{u}^N(x) +  \sum_{1\leq j \leq N} gG(x, y_j^N, k)  \cdot O[N^{\f{\epsilon_0}{3} -\f{\epsilon_1}{1-\epsilon_1}}
]  \cdot \|u^i\|_X \\ 
& =& \tilde{u}^N(x) +  \sum_{1\leq j \leq N} gG(x, y_j^N, k)  \cdot O[N^{\f{\epsilon_0}{3} -\f{\epsilon_1}{1-\epsilon_1}}
]  .
\eeas 
On the other hand,
\beas
\sum_{1\leq j \leq N} |g G(x, y_j^N, k)| &=& \f{1}{N}\sum_{1\leq j \leq N} |\beta_N| \cdot \Lambda \cdot |G(x, y_j^N, k)|\\
& \lesssim &  \f{1}{N} \cdot \sum_{1\leq j \leq N} \f{1}{|x-y_j^N|} \\
& \lesssim &  \f{1}{N} \max_{1\leq j \leq N} \f{1}{|x-y_j^N|} + 
\f{1}{N} \cdot \sum_{2r_N \leq |x-y_j^N|} \f{1}{|x-y_j^N|}
 \lesssim 1. 
\eeas 
Therefore, 
$$
u^N(x)= \tilde{u}^N(x) + N^{\f{\epsilon_0}{3} -\f{\epsilon_1}{1-\epsilon_1}}.
$$
This completes the proof of the Lemma.

\end{proof}

Define 
$$
w(x)= u^i(x)+ \int_{\Omega} G(x, y, k) V(y)\psi(y) dy. 
$$
We have the following two results. 

\begin{lem}
For all $x \in Y^N_{\epsilon_2}$, the following estimate holds uniformly: 
\[
| \tilde{u}^N(x) - w(x) |  \lesssim  N^{-\min\{ \f{1-\epsilon_0}{6}, \f{1- \epsilon_2}{3}, \epsilon_2, \f{\epsilon_0}{3} -\f{\epsilon_1}{1-\epsilon_1} \}}.
\]
\end{lem}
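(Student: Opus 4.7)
The first observation is that $w = \psi$: by the very definition of $\psi$ in (\ref{eq-psi}), we have $w(x) = u^i(x) + \mathcal{T}\psi(x) = \psi(x)$. Thus the goal reduces to controlling $|\tilde{u}^N(x) - \psi(x)|$ uniformly on $Y^N_{\epsilon_2}$.

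The plan is to decompose
\[
\tilde{u}^N(x) - \psi(x) = I_1(x) + I_2(x),
\]
where
\[
I_1(x) = \f{1}{N}\sum_{j} \beta_N \Lambda \, G(x, y_j^N, k)\bigl[x_j^N - \psi(y_j^N)\bigr],
\]
\[
I_2(x) = \f{1}{N}\sum_{j} \beta_N \Lambda \, G(x, y_j^N, k)\psi(y_j^N) - \int_\Omega G(x, y, k) V(y) \psi(y)\, dy.
\]
The term $I_1$ isolates the error already quantified by Theorem \ref{thm1}, while $I_2$ is the pure discretization error of the continuum integral representation of $\mathcal{T}\psi$. For $I_1$, I will combine the refined estimate $\max_{j} |x_j^N - \psi(y_j^N)| \lesssim N^{-\min\{(1-\epsilon_0)/6,\, \epsilon_1/(1-\epsilon_1) - \epsilon_0/3\}}$ extracted from Step 3 of the proof of Theorem \ref{thm1} with the kernel bound $\f{1}{N}\sum_{j} |G(x, y_j^N, k)| \lesssim 1$, which holds uniformly for $x \in Y^N_{\epsilon_2}$ by the same splitting used in the preceding lemma: the contribution from the nearest bubble is at most $\f{1}{N}\cdot N^{1-\epsilon_2} = N^{-\epsilon_2} \lesssim 1$, while the remaining terms are controlled by (\ref{assump2-3}).

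For $I_2$, I will first absorb the harmless factor $\beta_N - \beta = O(s^{1-\epsilon_1}) = O(1/N)$ and reduce matters to the pure sampling error
\[
\tilde I_2(x) = \beta \Lambda \Bigl[\, \f{1}{N}\sum_{j} G(x, y_j^N, k)\psi(y_j^N) - \int_\Omega G(x, y, k)\tilde{V}(y)\psi(y)\, dy\,\Bigr].
\]
The difficulty is that Assumption \ref{assump1} supplies the rate $N^{-\alpha/3}$ only at lattice points $x = y_j^N$, whereas here $x$ can lie at distance as small as $N^{-(1-\epsilon_2)}$ from its closest bubble $y_{j^*}^N$. The remedy is a near/far splitting around $x$ with a cutoff radius $r$: the near part of the sum is bounded by $N^{-\epsilon_2} + r$ using (\ref{assump2-3}), the near part of the integral is $\lesssim r^{2}$, and on the far region $\{|x-y|\geq r\}$ the integrand $y \mapsto G(x,y,k)\psi(y)$ is $C^{0,\alpha}$ with Hölder norm controlled by a negative power of $r$; Assumption \ref{assump1} is then transferred from $y_{j^*}^N$ to $x$ via the Lipschitz dependence of both the truncated sum and the truncated integral on the base point. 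Balancing in $r$ will produce the exponents $(1-\epsilon_2)/3$ and $\epsilon_2$ in the final minimum.

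The principal obstacle is precisely this transfer step for $I_2$: Assumption \ref{assump1} has to be upgraded from the discrete lattice $\{y_j^N\}$ to a generic point $x \in Y^N_{\epsilon_2}$, and one must simultaneously balance the near-field singularity of $G(x,\cdot,k)$ at $y = x$ against the Hölder-norm blow-up of any mollified version of $G$ in the far field while preserving the sampling rate of Assumption \ref{assump1}. Once that is achieved, adding the bound for $I_1$ yields the claimed estimate.
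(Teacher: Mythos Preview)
Your decomposition $I_1 + I_2$ and the reduction via $w=\psi$ are fine; this is exactly the paper's split into $e_4$ (your $I_1$) and $e_1+e_2+e_3+e_5$ (your $I_2$), together with what the paper records separately as Lemma~\ref{lem-w}. Your treatment of $I_1$ matches the paper's bound for $e_4$.

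Where your plan diverges from the paper, and where it is shakier, is the handling of $I_2$. You propose a near/far split at a variable radius $r$, observe that $y\mapsto G(x,y,k)\psi(y)$ is $C^{0,\alpha}$ on the far region with H\"older norm $\sim r^{-?}$, and then speak of ``balancing in $r$'' to produce the exponents $(1-\epsilon_2)/3$ and $\epsilon_2$. Two points. First, the H\"older regularity of $y\mapsto G(x,y,k)\psi(y)$ is not directly usable: Assumption~\ref{assump1} is not a general sampling estimate of the form $\bigl|\tfrac{1}{N}\sum_j f(y_j^N)-\int f\tilde V\bigr|\lesssim N^{-\alpha/3}\|f\|_{C^{0,\alpha}}$; it is specific to the kernel $G(y_j^N,\cdot,k)$ at lattice base points, with test function $f=\psi$. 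So you must first transfer the base point and then invoke Assumption~\ref{assump1} with $f=\psi$, not with a truncated Green function. Second, there is no genuine optimization in $r$: the exponent $\epsilon_2$ comes solely from the single nearest-bubble term $\tfrac{1}{N}|G(x,y_{j^*}^N,k)|\lesssim N^{-\epsilon_2}$, the exponent $(1-\epsilon_0)/6$ comes from Assumption~\ref{assump1} at $y_{j^*}^N$, and the transfer errors (your ``Lipschitz dependence'') are controlled by Lemma~\ref{lem-technique2} and Lemma~\ref{lem-limit-operator}(i) with $h=x-y_{j^*}^N$, $|h|\lesssim r_N$. Introducing a free radius $r$ only adds near-zone errors of order $r$ without improving the far-zone bounds.

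The paper's route is the streamlined version of what you sketch: pick the nearest $y_l^N$, peel off only that one term ($e_5$, giving $N^{-\epsilon_2}$), transfer the remaining sum and the full integral from $x$ to $y_l^N$ ($e_2,e_3$), and apply Assumption~\ref{assump1} at $y_l^N$ with $f=\psi$ ($e_1$). No cutoff radius, no balancing.
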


\begin{proof}
For each $x \in Y^N_{\epsilon_2}$, choose $y_l^N \in \{ y_j^N\}_{1\leq j \leq N}$ such that
\[
| x- y_l^N| =\min_{1\leq j \leq N} |x- y_j^N|.
\]
We have
\beas
\tilde{u}^N(x) - w(x) &= &  \f{1}{N} \beta_N \cdot \Lambda \cdot G(x,y_l^N, k) x_l^N + \f{1}{N}\sum_{j \neq l} \beta_N \cdot \Lambda \cdot G(x,y_j^N, k) x_j^N -\int_{\Omega} G(x, y, k) V(y)\psi(y) dy  \\
& =&  \left[\f{1}{N}\sum_{j \neq l} \beta_N \cdot \Lambda \cdot G(y_l^N,y_j^N, k) \psi(y_j^N) -\int_{\Omega} G(y_l^N, y, k) V(y)\psi(y) dy  \right] \\
&& + \f{1}{N}\sum_{j \neq l} \beta_N \cdot \Lambda \cdot [G(x,y_j^N, k) - G(y_l^N,y_j^N, k)] \psi(y_j^N)  \\
&& + \f{1}{N}\sum_{j \neq l} \int_{\Omega} [G(x, y, k) -G(y_l^N, y, k)] V(y)\psi(y) dy  \\
&& +  \f{1}{N}\sum_{j \neq l} \beta_N \cdot \Lambda \cdot G(x,y_j^N, k) (x_j^N- \psi(y_j^N)) +\f{1}{N} \beta_N \cdot \Lambda \cdot G(x,y_l^N, k) x_l^N \\
& =:& e_1 + e_2 + e_3 + e_4 +e_5.
\eeas

Let us now estimate $e_j$, $j=1,\cdots, 5$ one by one. 

First, by Assumption \ref{assump1}, 
$$
|e_1| \lesssim N^{-\f{\alpha}{3}}\cdot \|\psi\|_X \lesssim N^{-\f{1-\epsilon_0}{6}}.
$$

Second, similar to Lemma \ref{lem-technique2}, we can show that
$$
|e_2|  \lesssim |x-y_l^N|^{1- \epsilon_2} \cdot \|\psi\|_X \lesssim N^{-\f{1- \epsilon_2}{3}}\|\psi\|_X .
$$

Third, by Lemma \ref{lem-limit-operator}, 
$$
|e_3|  \lesssim |x-y_l^N|^{1- \epsilon_2} \cdot \|\psi\|_X \lesssim N^{-\f{1- \epsilon_2}{3}}\|\psi\|_X .
$$

Fourth, note that 
$$
|e_4| \lesssim
 \f{1}{N}\sum_{j \neq l} |\beta_N| \cdot \Lambda \cdot | \max_{1\leq j \leq N}|x_j^N- \psi(y_j^N)| \cdot \f{1}{|x-y_j^N|}.
$$
By Assumption \ref{assump2} and Theorem \ref{thm1}, we have
$$
|e_4| \lesssim \max_{1\leq j \leq N}|x_j^N- \psi(y_j^N)| \lesssim N^{-\min\{ \f{1-\epsilon_0}{6}, \f{\epsilon_0}{3} -\f{\epsilon_1}{1-\epsilon_1}\}}. 
$$
Finally, one can check that
$$
|e_5| \lesssim \f{1}{N} \cdot N^{1-\epsilon_2} \cdot \max_{1\leq j \leq N}\| x_j^N\| 
 \lesssim N^{-\epsilon_2}. 
$$
Therefore,  
$$
\tilde{u}^N(x) - w(x) = O( N^{-\min\{ \f{1-\epsilon_0}{6}, \f{1- \epsilon_2}{3}, \epsilon_2, \f{\epsilon_0}{3} -\f{\epsilon_1}{1-\epsilon_1} \}}). 
$$
This complete the proof of the Lemma.

\end{proof}
The following lemma holds. 
\begin{lem} \label{lem-w}
We have $w= \psi$.
\end{lem}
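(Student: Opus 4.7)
The plan is to recognize that, once the extension of $\psi$ to $\mathbb{R}^3$ is made precise, the identity $w = \psi$ is essentially immediate from the defining equation (\ref{eq-psi}) for $\psi$. First I would observe that $\psi \in X = C^{0,\alpha}(\Omega)$ satisfies
$$\psi(x) = u^i(x) + \int_\Omega G(x,y,k)\,V(y)\,\psi(y)\,dy \qquad \text{for every } x \in \Omega,$$
which is exactly $\psi(x) = w(x)$ on $\Omega$. Since the volume potential on the right-hand side is meaningful at every $x \in \mathbb{R}^3$, I would use this same formula to define (equivalently, extend) $\psi$ outside $\Omega$; then $\psi \equiv w$ on all of $\mathbb{R}^3$ by construction.

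If a more invariant argument is wanted, I would instead compare $w$ and $\psi$ as solutions of the same scattering problem. Applying $\Delta + k^2$ to both sides of (\ref{eq-psi}) (using the fact that $G(\cdot,y,k)$ is the outgoing Helmholtz Green function, so $(\Delta + k^2)\mathcal{T}\varphi = -V\varphi$ in the distributional sense on $\mathbb{R}^3$, with $\mathcal{T}\varphi$ outgoing), one sees that $\psi$ satisfies $(\Delta + k^2 - V)\psi = 0$ in $\mathbb{R}^3$ with $\psi - u^i$ satisfying the Sommerfeld radiation condition. The same reasoning applied to the formula defining $w$ shows that $w$ satisfies the same PDE with the same radiation condition. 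The well-posedness asserted in Lemma \ref{lem-limit-operator}(iii) (equivalently, uniqueness of the Lippmann--Schwinger problem when $V \le 0$) then yields $w = \psi$.

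Both arguments are essentially routine; there is no serious obstacle. The only thing to double-check is that the extension of $\psi$ off $\Omega$ is consistent, which is automatic since $V$ is supported in $\Omega$ and the volume potential $\int_\Omega G(x,y,k) V(y)\psi(y)\,dy$ depends continuously on $x \in \mathbb{R}^3$. Thus the content of the lemma is mainly notational: it records the fact that the limiting discrete-to-continuum field produced in the previous lemma is precisely the unique solution of the integral equation (\ref{eq-psi}).
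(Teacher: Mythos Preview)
Your first argument is correct and in fact more direct than the paper's: since $\psi$ solves $\psi = u^i + \mathcal{T}\psi$ on $\Omega$ and is naturally extended to $\mathbb{R}^3$ by the same formula (the integral being taken only over $\Omega$), the identity $\psi \equiv w$ is tautological. The paper instead argues at the PDE level: it computes $(\Delta + k^2)w = V\psi$ and $(\Delta + k^2)\psi = V\psi$, subtracts to get $(\Delta + k^2)(w-\psi) = 0$ in $\mathbb{R}^3$, observes that $w-\psi$ satisfies the radiation condition, and invokes uniqueness for the \emph{free} Helmholtz equation. Your route avoids any uniqueness argument altogether, at the mild cost of having to spell out the extension convention; the paper's route is more invariant but strictly longer.

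Your second argument, however, contains a slip that you should be aware of. Applying $\Delta + k^2$ to the definition of $w$ yields $(\Delta + k^2)w = V\psi$, with $\psi$ (not $w$) on the right-hand side, so one does \emph{not} obtain $(\Delta + k^2 - V)w = 0$; rather $(\Delta + k^2 - V)w = V(\psi - w)$, which vanishes only once the conclusion is known. Thus $w$ and $\psi$ are not, a priori, two solutions of the same Lippmann--Schwinger problem, and the uniqueness in Lemma~\ref{lem-limit-operator}(iii) cannot be invoked in the way you describe. The correct PDE version is precisely the paper's: subtract first, so the right-hand sides cancel, and then use uniqueness for the homogeneous free-space Helmholtz problem with radiation condition.
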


\begin{proof}
It is clear that $w$ satisfies the equation
$$
(\triangle + k^2) w =  (\triangle + k^2) u^i +  V \psi = V \psi. 
$$
Recall that
$$
(\triangle + k^2) \psi - V \psi =(\triangle + k^2) u^i=0. 
$$
Therefore, we have
$$
(\triangle + k^2) (w- \psi) =0. 
$$
On the other hand, it is easy to see the $w-\psi$ satisfies the radiation condition. The conclusion $w= \psi$ follows immediately. 
\end{proof}

As a consequence of the above two lemmas, we obtain the following theorem. 

\begin{thm}
Let $\omega < \omega_M$ and let $V$ be defined by (\ref{v}). Then under Assumptions \ref{assump0-1}--\ref{assump0-3}, the solution to the scattering problem
(\ref{eq-scattering}) converges to the solution to the wave equation
$$ 
(\triangle + k^2-V) \psi  =0
$$
together with the radiation condition imposed on $\psi- u^i$ at infinity, in the sense that
for $x \in Y^N_{\epsilon_2}$, the following estimate holds uniformly:
\[
| u^N(x) - \psi(x) |  \lesssim N^{-\min\{ \f{1-\epsilon_0}{6}, \f{1- \epsilon_2}{3}, \epsilon_2, \f{\epsilon_0}{3} -\f{\epsilon_1}{1-\epsilon_1} \}}.
\]

\end{thm}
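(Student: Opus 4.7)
The plan is to assemble the theorem by telescoping through the intermediate field $\tilde{u}^N$ and the potential $w$ introduced in the preceding lemmas. Concretely, for any $x \in Y^N_{\epsilon_2}$ I would write
\[
|u^N(x) - \psi(x)| \le |u^N(x) - \tilde{u}^N(x)| + |\tilde{u}^N(x) - w(x)| + |w(x) - \psi(x)|,
\]
and then bound the three terms one at a time with results already established in the excerpt.

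The first term is controlled by the preceding lemma comparing $u^N$ and $\tilde{u}^N$, giving the rate $N^{\frac{\epsilon_0}{3} - \frac{\epsilon_1}{1-\epsilon_1}}$ (which is a negative power of $N$ thanks to Assumption \ref{assump0-3}). The second term is handled by the second lemma of Subsection \ref{subsec-effective-2}, which already delivers the full minimum rate $N^{-\min\{\frac{1-\epsilon_0}{6},\,\frac{1-\epsilon_2}{3},\,\epsilon_2,\,\frac{\epsilon_0}{3} - \frac{\epsilon_1}{1-\epsilon_1}\}}$. The third term is zero by Lemma \ref{lem-w}. Taking the worst of the three exponents yields the claimed convergence rate, uniformly in $x \in Y^N_{\epsilon_2}$.

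To complete the statement I would then verify that $\psi$ is indeed the unique solution of $(\triangle + k^2 - V)\psi = 0$ with $\psi - u^i$ satisfying the Sommerfeld radiation condition. The PDE itself was noted right after equation (\ref{eq-psi}): applying $\triangle + k^2$ to $\psi - \mathcal{T}\psi = u^i$ and using $(\triangle + k^2) u^i = 0$ together with the fact that $\mathcal{T}\psi$ is a volume potential with density $V\psi$ produces the equation in all of $\R^3$. The radiation condition on $\psi - u^i = \mathcal{T}\psi$ follows from the outgoing nature of the kernel $G(\cdot,\cdot,k)$. Uniqueness, and hence well-posedness of the limiting problem, is guaranteed in the regime $\omega < \omega_M$ by Lemma \ref{lem-limit-operator}(iii), since there $V \le 0$ and the Lippmann--Schwinger equation with potential $k^2 - V$ is uniquely solvable.

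Since every ingredient is already in place, I do not anticipate a genuine obstacle; the proof is essentially bookkeeping via the triangle inequality. The only subtlety worth double-checking is that the restriction $x \in Y^N_{\epsilon_2}$ is inherited correctly from the first lemma (which uses the separation $|x - y_j^N| \ge N^{-(1-\epsilon_2)}$ to control the $\frac{s}{|x-y_j^N|}$ contribution from Proposition \ref{prop-point-approx}) and is consistent with the $\epsilon_2$ appearing inside the minimum from the second lemma; both come from the same choice of $Y^N_{\epsilon_2}$, so no extra constraint on $\epsilon_2$ is needed beyond $\epsilon_2 \in (0,\tfrac13)$.
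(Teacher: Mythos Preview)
Your proposal is correct and matches the paper's approach exactly: the paper states the theorem immediately after Lemma~\ref{lem-w} with the phrase ``As a consequence of the above two lemmas,'' which is precisely the triangle-inequality telescoping through $\tilde{u}^N$ and $w$ that you outline, together with $w=\psi$. Your additional remarks on the PDE, the radiation condition, and uniqueness via Lemma~\ref{lem-limit-operator}(iii) simply make explicit what the paper records elsewhere.
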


The above theorem shows that under certain conditions, we can treat the bubbly fluid as an effective medium for acoustic wave propagation. Note that 
$$
\triangle + k^2 - V = \triangle + k^2 (1- \f{1}{k^2}\beta \cdot \Lambda \cdot \tilde{V}). 
$$
Thus, the effective medium can be characterized by the refractive index 
$1- \f{1}{k^2}\beta \cdot \Lambda \cdot \tilde{V}$. 
By our assumption, $k=O(1)$ and $\tilde{V} = O(1)$. When $\beta \cdot \Lambda \gg 1$, we see that we have an effective high refractive index medium. As a consequence, this together with the main result in \cite{hai2} gives a rigorous mathematical theory for the super-focusing experiment in \cite{lanoy}.  

Similarly, we have the following result for the case $\omega > \omega_M$.

\begin{thm}
Let $\omega > \omega_M$ and assume that $V(x)> k^2$ almost everywhere in $\Omega$. Then under Assumptions \ref{assump0-1}-\ref{assump0-3}, the solution to the scattering problem
(\ref{eq-scattering}) converges to the solution to the following dissipative equation
$$ 
(\triangle + k^2 -V) \psi=0
$$
together with the radiation condition imposed on $\psi- u^i$ at infinity, in the sense that
for $x \in Y^N_{\epsilon_2}$, the following estimate holds uniformly:
\[
| u^N(x) - \psi(x) |  \lesssim N^{-\min\{ \f{1-\epsilon_0}{6}, \f{1- \epsilon_2}{3}, \epsilon_2, \f{\epsilon_0}{3} -\f{\epsilon_1}{1-\epsilon_1} \}}.
\]

\end{thm}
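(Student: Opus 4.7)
The plan is to mirror the three-stage pipeline used in the $\omega < \omega_M$ theorem, since the $\omega > \omega_M$ case differs only in the sign of $V$ and in which clause of Lemma \ref{lem-limit-operator} supplies continuum invertibility. The stages are: (a) well-posedness of the continuum equation $\psi - \mathcal{T}\psi = u^i$ and of the discrete point-interaction system (\ref{equation-xN}); (b) the discrete convergence $\max_j |x_j^N - \psi(y_j^N)| \lesssim N^{-\f{1-\epsilon_0}{6}}$ of Theorem \ref{thm1}; (c) the passage from the micro-field $\tilde u^N$ to $w = \psi$ on the safe region $Y^N_{\epsilon_2}$. Once stage (a) is justified in the new sign regime, the remaining estimates will be sign-blind in $V$ and carry over essentially verbatim.

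For stage (a), I would invoke Lemma \ref{lem-limit-operator}(iv): under the standing hypothesis $V(x) > k^2$ almost everywhere in $\Omega$, the operator $\triangle + (k^2 - V)$ is uniformly elliptic with definite negative zeroth-order coefficient $k^2 - V$, and together with the outgoing radiation condition on $\psi - u^i$ at infinity this yields a unique solution $\psi \in X = C^{0,\f{1-\epsilon_0}{2}}(\Omega)$ to $\psi - \mathcal{T}\psi = u^i$ and a bounded inverse of $Id - \mathcal{T}$ on $X$. The proof of Proposition \ref{prop1} in Section \ref{sec-proof} relies only on this invertibility together with the uniform approximation of the discrete sum $\f{1}{N}\sum_{i \neq j} \beta_N \Lambda G(y_j^N, y_i^N, k) f(y_i^N)$ by $(\mathcal{T} f)(y_j^N)$ provided by Lemma \ref{lem-asump}; neither ingredient depends on the sign of $V$. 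Consequently, for $N$ large, the discrete system has a unique solution with $\max_j |z_j^N| \lesssim \|b\|_X$.

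Stages (b) and (c) are then mechanical. The residual analysis $r^N - T^N r^N = e^N + q^N$ driving Theorem \ref{thm1} bounds $e^N$ via Assumption \ref{assump1} and Lemma \ref{lem-epsilon} and bounds $q^N$ via Proposition \ref{prop-point-approx} and Lemma \ref{lem-technique2}; these estimates use only the near-resonant bound $|g| = O(s^{1-\epsilon_1})$ and the configuration Assumptions \ref{assump0}, \ref{assump2}, \ref{assump1}, none of which touch the sign of $V$. Applying Proposition \ref{prop1} in its new form from stage (a) gives the discrete rate. The two micro-field lemmas preceding the $\omega < \omega_M$ convergence theorem — the estimate $|\tilde u^N - u^N| \lesssim N^{\f{\epsilon_0}{3} - \f{\epsilon_1}{1 - \epsilon_1}}$ on $Y^N_{\epsilon_2}$ and the telescoping comparison of $\tilde u^N$ with $w$ into the five pieces $e_1,\dots,e_5$ — are sign-blind as well. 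Finally I would replay Lemma \ref{lem-w} verbatim: $w$ and $\psi$ each satisfy $(\triangle + k^2 - V)\cdot = 0$ in $\R^3$, their difference is outgoing, and uniqueness (now from Lemma \ref{lem-limit-operator}(iv) rather than (iii)) forces $w \equiv \psi$.

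The only substantive obstacle, and the reason the hypothesis $V > k^2$ cannot be dropped, is continuum uniqueness in the dissipative regime. Remark \ref{rem41} emphasizes that when $k^2 - V$ is allowed to change sign in $\Omega$, unique solvability of $(\triangle + k^2 - V)\psi = 0$ with the outgoing radiation condition is an open question; the hypothesis $V > k^2$ is imposed precisely to avoid this. Given it, combining the discrete rate from stage (b) with the two micro-to-effective estimates from stage (c) yields the claimed uniform rate $N^{-\min\{\f{1-\epsilon_0}{6},\f{1-\epsilon_2}{3},\epsilon_2,\f{\epsilon_0}{3}-\f{\epsilon_1}{1-\epsilon_1}\}}$ on $Y^N_{\epsilon_2}$.
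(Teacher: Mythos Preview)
Your proposal is correct and mirrors the paper's approach: the paper gives no separate proof for this theorem, stating it with the word ``Similarly'' after the $\omega < \omega_M$ theorem, and a later remark confirms that all arguments carry over once $Id - \mathcal{T}$ is invertible, which Lemma~\ref{lem-limit-operator}(iv) supplies under the hypothesis $V > k^2$. One minor inaccuracy: in your replay of Lemma~\ref{lem-w}, uniqueness of $w = \psi$ does not come from Lemma~\ref{lem-limit-operator}(iii) or (iv) but from the identity $(\triangle + k^2)(w - \psi) = 0$ together with the outgoing radiation condition for the \emph{free} Helmholtz equation, which is already sign-blind in $V$.
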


Finally, we conclude this section with the following three important remarks.

\begin{rmk}
At the resonant frequency $\omega= \omega_M$, the scattering coefficient $g$ is of order one. 
Thus each bubble scatter is  a point source with magnitude one.  As a consequence, the addition or removal of one bubble from the fluid affects the total field by a magnitude of the same order as the incident field. Therefore, we cannot expect any effective medium theory for the bubbly medium at this resonant frequency.  
\end{rmk}

\begin{rmk}
The super-focusing (or equivalently super-resolution) theory, developed in this paper for bubbly fluid seems to be 
different from the one developed for Helmholtz resonators \cite{hai} and plasmonic 
nanoparticles \cite{matias}. However, they are closely related. In \cite{hai, matias}, it is shown that super-focusing (or super-resolution) is due to sub-wavelength propagating resonant modes which are generated by the sub-wavelength resonators embedded in the background homogeneous medium. In those two cases, the region  with subwavelength resonators has size smaller or much smaller than the incident wavelength, and the number of sub-wavelength resonators is not very large, so is the number of sub-wavelength resonant modes. As a result, an effective medium theory is not necessary or even true. However, in the case of bubbles in a fluid as considered in this paper, the region with bubbles has size comparable or greater than the incident wavelength. This together with the fact that the ratio between the size of the individual bubble and the incident wavelength near the Minnaert resonant frequency is extremely small, indicates that the number of bubbles can be very large as is in the experiment in \cite{lanoy}, even though they are dilute. This large number of bubbles generates a large number of resonant modes which eventually yield a continuum limit in the form of an effective medium with high refractive index. In fact, these resonant modes can be obtained from the point interaction system 
(\ref{equation-xN}). On the other hand, it is shown in \cite{hai3} that super-focusing (or super-resolution) is possible in high refractive index media. In this regard, the effective medium theory developed in this paper can be viewed as a bridge between the super-focusing (or super-resolution) theories in \cite{hai} and \cite{hai3}. 
\end{rmk}

\begin{rmk}
In this section, we derived an effective medium theory for the case  
$\omega  < \omega_M$ and a special case of $\omega  > \omega_M$ with some additional assumptions.  However, our results still hold for the case $\omega  > \omega_M$ without any additional assumption, if we assume that the limiting system $Id - \mathcal{T}$ has a trivial kernel. This assumption implies that the limiting system is well-posed. 
\end{rmk}

\section{Proof of Proposition \ref{prop1}}  \label{sec-proof}
\begin{proof}
Step 1. Let $b\in X$ be given and let $\psi$ be the solution to 
$$
(Id - \mathcal{T}) \psi = b. 
$$
By Lemma \ref{lem-limit-operator}, we have
$\|\psi\|_X  \leq C_1 \|b\|_X$ for some constant $C_1$ independent of $N$.
Note that
$$
\psi(y_j^N) -\int_{\Omega}G(y_j^N, y, k) V(y) \psi(y) dy = b(y_j^N).
$$

Denote $\psi_j^{N,0}= \psi(y_j^N)$, $b_j^N = b(y_j^N)$. 
Then we have
$$
\psi^{N,0} -T^N \psi^{N,0} = b^N + \epsilon^{N,0},
$$ 
where 
\be \label{epsilon1}
\epsilon_j^{N,0} = \f{1}{N} \sum_{i \neq j}\beta_N \cdot \Lambda \cdot G(y_j^N,y_i^N,k) b(y_j^N) -
\int_{\Omega} G(y_j^N, y,k) V(y) b(y) dy.
\ee

It is clear that 
\beas
\max_{1\leq j \leq N} |\psi_j^{N,0}|  & \leq & C_1 \|b\|_X, \\
\max_{1\leq j \leq N} |\epsilon_j^{N,0}| &\leq & C_2 N^{-\alpha} \|b\|_X,
\eeas
for some constants $C_1$ and $C_2$ independent of $N$. 
Here, we have used Assumption \ref{assump2} in the second estimate above.  
By Lemma \ref{lem-epsilon} (which is given below this proposition), 
there exist a constant $C_3$ independent of $N$ and a function $\tilde{\epsilon}^{N,0} \in X$ such that
$\tilde{ \epsilon}^{N,0}(y_j^N) = \epsilon_j^{N,0}$ and 
$$
 \|\tilde{\epsilon}^{N,0}\|_X \leq C_3 \delta(N) \|b\|_X,   
$$
where $\delta(N)= N^{-\f{1-\epsilon_1}{6}}$.

Step 2. Let $b$ be replaced by $-\tilde{\epsilon}^{N,0}$. By applying the same argument as in Step 1, we can find $\psi^{N, 1}$ and $\tilde{\epsilon}^{N,1} \in X$ such that
$$
\psi^{N, 1} -T^N \psi^{N, 1} = - \epsilon^{N,0} + \epsilon^{N,1}, 
$$
with $\epsilon^{N,1}_j = \tilde{\epsilon}^{N,1}(y_j^N)$ and
\beas
\max_{1\leq j \leq N} |\psi_j^{N,1}|  &\leq & C_1 \|\tilde{\epsilon}^{N,0}\|_X \leq
C_1\cdot C_3\cdot \delta(N) \cdot \|b\|_X,\\
\|\tilde{\epsilon}^{N,1}\|_X & \leq &  C_3 \cdot \delta(N) \cdot \|\tilde{\epsilon}^{N,0}\|_X 
\leq (C_3\cdot \delta(N))^2 \cdot \|b\|_X. 
\eeas

Step 3. 
Continuing the procedure, we obtain sequences $\psi^{N, m}$, 
$\epsilon^{N,m}$ and $\tilde{\epsilon}^{N,m} \in X$, $m=1, 2, ...$ such that 
$$
\psi^{N, m} - T^N \psi^{N, m} = - \epsilon^{N,m-1} + \epsilon^{N,m}, 
$$
with $\epsilon^{N,m}_j = \tilde{\epsilon}^{N,m}(y_j^N)$ and
\beas
\max_{1\leq j \leq N} |\psi_j^{N,m}| & \leq & C_1 \cdot (C_3 \cdot \delta(N))^{m} \|b\|_X, \\
\|\tilde{\epsilon}^{N,m}\|_X &\leq &(C_3 \cdot \delta(N))^{m+1}\|b\|_X. 
\eeas

Step 4. By taking $N$ to be sufficiently large, say, $N\geq N_0$ for some $N_0 >0$, we can conclude that the series 
$
\sum_{m \leq 0 } \psi^{N, m} 
$
is absolutely convergent. We denote by $ \psi^N =\sum_{m \geq 0 } \psi^{N, m}$. 
Then
$$
\psi^N - T^N \psi^N = b^N,
$$
and
$$ 
\max_{1\leq j \leq N} |\psi_j^{N}|  \lesssim  \|b\|_X. 
$$

Step 5. So far, 
we have constructed a solution to the equation
$$
x- T^N x = b^N
$$
in the case when $b^N_j= b(y_j^N)$ for some $b\in X$. By varying $b$, we can show that
the solution exists for any $b^N$. It follows that the 
operator $Id - T^N$ is surjective. As a consequence, it is also injective. This proves the uniqueness of the solution and completes the proof of the proposition. 
\end{proof}

The following result is needed in the proof of the above proposition.

\begin{lem} \label{lem-epsilon}
Let $b\in X = C^{0, \alpha}(\Omega)$ for $\alpha =\f{1-\epsilon_0}{2}$ and let
\[
\epsilon_j^{N} = \f{1}{N} \sum_{i \neq j}\beta_N \cdot \Lambda \cdot G(y_j^N,y_i^N,k) b(y_i^N) -
\int_{\Omega} G(y_j^N, y,k) V(y) b(y) dy.
\]
Then there exists a function $\tilde{\epsilon} \in X$ such that
$\tilde{ \epsilon}(y_j^N) = \epsilon_j^{N}$ and 
$$
 \|\tilde{\epsilon}\|_X \lesssim N^{-\f{1-\epsilon_0}{6}}\|b\|_X.  
$$
\end{lem}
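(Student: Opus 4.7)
The plan is to produce $\tilde\epsilon$ explicitly by replacing the exclusion $i\neq j$ in the definition of $\epsilon_j^N$ with a smooth cutoff on the singular part of the Green function. Fix a cutoff $\chi\colon[0,\infty)\to[0,1]$ that vanishes on $[0,1/2]$ and equals one on $[1,\infty)$, and set
\[
G_N(x,y):=\chi(|x-y|/r_N)\,G(x,y,0),\qquad G_N(x,y,k):=G_N(x,y)+\bigl(G(x,y,k)-G(x,y,0)\bigr).
\]
Because the remainder $G(\cdot,\cdot,k)-G(\cdot,\cdot,0)$ is smooth, $G_N(\cdot,y,k)$ has no singularity at $y$. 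Moreover, Assumption~\ref{assump0} gives $|y_j^N-y_i^N|\ge r_N$ for $i\neq j$, so $G_N(y_j^N,y_i^N,k)=G(y_j^N,y_i^N,k)$ for all $i\neq j$. I would then take
\[
\tilde\epsilon(x):=\frac{\beta_N\Lambda}{N}\sum_{i=1}^{N} G_N(x,y_i^N,k)\,b(y_i^N)-\int_\Omega G(x,y,k)V(y)b(y)\,dy-c_N b(x),
\]
with $c_N=O(1/N)$ chosen to cancel the spurious ``$i=j$'' contribution coming from the smooth part of the Green function at coincident points. Evaluating at $x=y_j^N$ one recovers $\tilde\epsilon(y_j^N)=\epsilon_j^N$.

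For the sup-norm estimate, the target bound $\|\tilde\epsilon\|_\infty\lesssim N^{-(1-\epsilon_0)/6}\|b\|_X$ is the natural extension of Assumption~\ref{assump1} (equivalently, Lemma~\ref{lem-asump}) from the base points $y_j^N$ to arbitrary $x\in\Omega$. I would prove it by splitting $G_N$ into its singular and smooth parts: the smooth part is a Riemann-sum approximation of an integral against the $C^{0,\alpha}$ function $(G(x,y,k)-G(x,y,0))V(y)b(y)$ in $y$, which by the argument behind Assumption~\ref{assump1} yields $O(N^{-\alpha/3})$ with $\alpha=(1-\epsilon_0)/2$; for the singular part Assumption~\ref{assump2} controls the far-field contribution while the $r_N$ cutoff handles the near-field, producing an additional $O(r_N^2\|b\|_X)=O(N^{-2/3}\|b\|_X)$ error that is subsumed by $N^{-(1-\epsilon_0)/6}$ since $\epsilon_0<1$.

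The Hölder seminorm is the main difficulty. I would split the estimation of $\tilde\epsilon(x)-\tilde\epsilon(x')$ by the size of $|x-x'|$ relative to $r_N$. If $|x-x'|\ge r_N$, the sup-norm bound gives $|\tilde\epsilon(x)-\tilde\epsilon(x')|\lesssim N^{-\alpha/3}\lesssim|x-x'|^\alpha\cdot N^{-\alpha/3}$ since $r_N^\alpha\simeq N^{-\alpha/3}$. If $|x-x'|<r_N$, I would bound the $x$-gradient of $\tilde\epsilon$: the kernel $G_N(\cdot,y,k)$ is Lipschitz in $x$ with gradient of size $\lesssim\min\{r_N^{-2},|x-y|^{-2}\}$, so after applying Assumption~\ref{assump2} and the analogous bound on the volume integral one obtains a Lipschitz-type bound that, combined with the sup bound by real interpolation between $L^\infty$ and Lipschitz, gives a Hölder seminorm of order $N^{-(1-\epsilon_0)/6}$. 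The exponent $\alpha=(1-\epsilon_0)/2$ is tuned precisely so that these two regimes give matching contributions.

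The main technical hurdle is obtaining the correct rate in the intermediate regime where $|x-x'|$ is comparable to $r_N$, since that is where the cutoff-induced gradient of $G_N$ is largest and the discrete-to-continuum error is most delicate. Everything else — the diagonal correction, the separation of singular and smooth parts of $G$, and routine applications of Assumptions~\ref{assump0}--\ref{assump1} — is straightforward once this regime is under control.
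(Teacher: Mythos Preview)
Your construction of $\tilde\epsilon$ by regularizing the Green function at scale $r_N$ is exactly the paper's approach; the paper uses a piecewise-linear truncation $g_N(r)$ rather than a smooth cutoff and reduces to $k=0$ first (so no $c_Nb(x)$ correction is needed), but these differences are cosmetic.

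The genuine gap is in your rate accounting. The chain you write for $|x-x'|\ge r_N$, namely $N^{-\alpha/3}\lesssim |x-x'|^\alpha\cdot N^{-\alpha/3}$, is false when $|x-x'|\ll 1$; a sup bound of order $N^{-\alpha/3}$ in that regime gives only $|\tilde\epsilon(x)-\tilde\epsilon(x')|/|x-x'|^\alpha \lesssim N^{-\alpha/3}/r_N^{\alpha}\simeq 1$, with no decay in $N$. Likewise, the $L^\infty$--Lipschitz interpolation you propose for the short-range regime yields $[\tilde\epsilon]_{C^{0,\alpha}}\lesssim \|\tilde\epsilon\|_\infty^{1-\alpha}\|\nabla\tilde\epsilon\|_\infty^{\alpha}\lesssim (N^{-\alpha/3})^{1-\alpha}(r_N^{-\epsilon_0})^{\alpha}=N^{-\alpha^2/3}$, strictly weaker than the target $N^{-\alpha/3}$. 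Your scheme would in fact work if the sup bound were the stronger $N^{-2\alpha/3}=N^{-(1-\epsilon_0)/3}$, but the Riemann-sum argument you sketch only recovers the node rate $N^{-\alpha/3}$ coming from Assumption~\ref{assump1}.

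The missing ingredient, which the paper isolates as Lemma~\ref{lem-technique2}, is that the discrete sum $\frac{1}{N}\sum_i G_N(\cdot,y_i^N)b(y_i^N)$ is H\"older of exponent $1-\epsilon_0=2\alpha$ with seminorm $O(\|b\|_{C^0})$ \emph{uniformly in $N$}; this is proved by a near-field/far-field split that uses both inequalities in Assumption~\ref{assump2}. With this in hand the paper first upgrades the sup bound by comparing an arbitrary $x$ to its nearest node $y_l^N$ (the $2\alpha$-H\"older difference contributes $r_N^{1-\epsilon_0}=N^{-(1-\epsilon_0)/3}$), and then interpolates the uniform $|h|^{2\alpha}$ bound against the sup bound via $\min(A,B)\le\sqrt{AB}$ to obtain $|\tilde\epsilon(x+h)-\tilde\epsilon(x)|\lesssim \|\tilde\epsilon\|_\infty^{1/2}\,|h|^{\alpha}\,\|b\|_{C^0}^{1/2}$. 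The choice $\alpha=(1-\epsilon_0)/2$ is calibrated so that the H\"older exponent furnished by Lemma~\ref{lem-technique2} is exactly twice the target one; it is this doubling, not a matching of two scale regimes, that produces the factor $N^{-(1-\epsilon_0)/6}$.
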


\begin{proof} 

We only give a proof for the case when the wave number $k=0$. For the case when $k \neq 0$, we can first decompose $G(x, y, k)$ into the singular part, 
$G(x, y, 0)$,  and a smooth part and then decompose $\epsilon_j^{N}$ accordingly. The singular part corresponds exactly to the case when $k=0$, while the smooth part can be handled in a straightforward way. 

Step 1. Let 
\[ g_N(r) =
 \begin{cases}
    -\f{1}{4 \pi r},      & \quad \text{if } r \geq r_N, \\
    \f{g(r_N)}{r_N} r, & \quad \text{if } 0\leq r < r_N,\\
  \end{cases}
  \]
and $$G_N(x, y) = g_N(|x-y|).$$ 
Define 
\[
\tilde{\epsilon}(y) = \f{1}{N} \sum_{1\leq i \leq N}\beta_N \cdot \Lambda \cdot G_N(y,y_i^N) b(y_i^N) -
\int_{\Omega} G(y, x, 0) V(x) b(x) dx. 
\]
It is clear that $\tilde{\epsilon}(y_j^N) = \epsilon_j^N$.  
 
Step 2.  We show that 
\be \label{estimate-step2}
\max_{x\in \Omega } |\tilde{\epsilon}(x)| \lesssim \f{1}{N^{\f{1-\epsilon_0}{3}}} \|b\|_X.
\ee

Indeed, for each $y \in \Omega$, let $y_l^N \in \{ y_j^N\}_{1\leq j \leq N}$ be such that
\[
| y- y_l^N| =\min_{1\leq j \leq N} |x- y_j^N|.
\]
It is clear that $| y- y_l^N| =O(r_N) = O(\f{1}{N^{\f{1}{3}}})$. By Lemma \ref{lem-technique2}, \beas   
\f{1}{N} \sum_{1\leq i \leq N}\beta_N \cdot \Lambda \cdot G_N(y,y_i^N) b(y_i^N) = \f{1}{N}  \sum_{1\leq i \leq N}\beta_N \cdot \Lambda \cdot G_N(y_l^N,y_i^N) b(y_i^N)
+ O(\f{1}{N^{\f{1}{3}}}). 
\eeas
On the other hand, by Lemma \ref{lem-limit-operator},
$$
| \int_{\Omega} G(y, x, 0) V(x) b(x) dx - \int_{\Omega} G(y_l^N, x, 0) V(x) b(x)  dx| 
\lesssim |y-y_l^N|^{1-\epsilon_0} \cdot \|b\|_X \lesssim \f{1}{N^{\f{1-\epsilon_0}{3}}}\|b\|_X. 
$$

Therefore, we can conclude that
$$
|\tilde{\epsilon}(y)- \tilde{\epsilon}(y_l^N) | \lesssim 
\f{1}{N^{\f{1-\epsilon_0}{3}}}\|b\|_X. 
$$ 
This together with
$$
\max_{1\leq j \leq N } | \tilde{\epsilon}(y_j^N)| \lesssim N^{-\alpha} \|b\|_X,
$$
which follows from Lemma \ref{lem-asump}, yields the desired estimate 
(\ref{estimate-step2}).

Step 3. As a consequence of estimate (\ref{estimate-step2}), we have
\be \label{estimate-step3}
\max_{x, x+h \in \Omega } |\tilde{\epsilon}(x+h) -\tilde{\epsilon}(x)| \lesssim \f{1}{N^{\f{1-\epsilon_0}{3}}} \|b\|_X.
\ee

Step 4. We show that
\be  \label{estimate-step4}
\max_{x, x+h \in \Omega } |\tilde{\epsilon}(x+h) -\tilde{\epsilon}(x) | \lesssim  |h|^{1-\epsilon_0} \cdot \|b\|_X.
\ee
In fact, by Lemma \ref{lem-limit-operator}, we have 
\[
\max_{x, x+h \in \Omega } | \int_{\Omega} G(x+h, y, 0) V(y) b(y) dy - \int_{\Omega} G(x, y, 0) V(y) b(y) dy | \lesssim | h|^{1-\epsilon_0} \cdot \|b\|_X.
\]
This together with Lemma \ref{lem-technique2} proves estimate (\ref{estimate-step4}).

Step 5. Finally, combining (\ref{estimate-step3}) and (\ref{estimate-step4}), we get 
\[
\max_{x, x+h \in \Omega } |\tilde{\epsilon}(x+h) -\tilde{\epsilon}(x)| \lesssim \f{1}{N^{\f{1-\epsilon_0}{6}}} \cdot h^{\f{1-\epsilon_0}{2}} \cdot \|b\|_X.
\]
Therefore, we have shown that 
$\tilde{e} \in X=C^{\alpha}(\Omega) $ with $\alpha= \f{1-\epsilon_0}{2}$ and $\|\tilde{e}\|_X  \lesssim \f{1}{N^{\f{1-\epsilon_0}{6}}} \|b\|_X$. 
Hence, the lemma is proved. 

\end{proof}

Finally, we present a technical lemma which is used in the proof of Lemma \ref{lem-epsilon}.

\begin{lem}  \label{lem-technique2}
Let $G_N(\cdot, \cdot)$ be defined as in Step 1 of the proof of Lemma \ref{lem-epsilon}. Then following estimate holds
\be \label{inequality2}
|\f{1}{N} \sum_{1\leq i \leq N}[G_N(x+h,y_i^N) - G_N(x,y_i^N)]b(y_i^N) | \lesssim |h|^{1-\epsilon_0} \cdot \|b\|_{C^0(\overline{\Omega})},
\ee
for all $x, x+h \in \Omega$.
\end{lem}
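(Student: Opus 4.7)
The plan is to derive a pointwise bound on $|G_N(x+h,y_i^N) - G_N(x,y_i^N)|$ and then sum over $i$ using the two inequalities of Assumption \ref{assump2}. The truncated profile $g_N$ is piecewise $C^1$ with $|g_N'(r)|\leq 1/(4\pi r_N^2)$ for every $r>0$ and $|g_N'(r)|=1/(4\pi r^2)$ for $r\geq r_N$. Applying the mean value theorem on the interval joining $|x-y_i^N|$ and $|x+h-y_i^N|$, whose endpoints differ by at most $|h|$, yields the estimate
$$|G_N(x+h,y_i^N)-G_N(x,y_i^N)|\;\leq\;\frac{|h|}{4\pi\bigl(\max(\min(|x-y_i^N|,|x+h-y_i^N|),r_N)\bigr)^2},$$
which I will use together with the crude bound $|G_N(\cdot,y_i^N)|\leq 1/(4\pi r_N)$ whenever a distance drops below $r_N$.

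Setting $d_i=|x-y_i^N|$, I split the sum into a far region $d_i\geq\max(2|h|,2r_N)$, an intermediate region $2|h|\leq d_i<2r_N$ (possibly empty), and a close region $d_i<2|h|$. In the far region the derivative bound becomes $4|h|/d_i^2$, and (\ref{assump2-2}) applied with threshold $\max(2|h|,2r_N)$ gives a contribution of order $|h|\cdot(\max(|h|,r_N))^{-\epsilon_0}\lesssim|h|^{1-\epsilon_0}$ in both cases $|h|\geq r_N$ and $|h|<r_N$ (using $|h|\leq r_N$ in the latter). In the intermediate region, Assumption \ref{assump0} bounds the number of contributing points by an $N$-independent constant, giving a contribution $\lesssim|h|/(Nr_N^2)\lesssim|h|\,r_N\lesssim|h|^{1-\epsilon_0}$. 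In the close region I subdivide further: when $|h|<r_N$, only $O(1)$ points qualify and each contributes at most $O(|h|/(Nr_N^2))$ by the linear-regime bound; when $|h|\geq r_N$, I split into $d_i<r_N$ (finitely many points, total contribution $\lesssim r_N^2\lesssim|h|^{1-\epsilon_0}$) and $r_N\leq d_i<2|h|$, where (\ref{assump2-3}) applied at both $x$ and $x+h$ bounds the resulting sum of $1/d_i+1/|x+h-y_i^N|$ by $\lesssim|h|\lesssim|h|^{1-\epsilon_0}$. Throughout, $\|b\|_{C^0(\overline{\Omega})}$ factors out since $|b(y_i^N)|\leq\|b\|_{C^0(\overline{\Omega})}$.

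The main subtlety is the case analysis at the threshold $|h|\sim r_N$: one must transition smoothly between the derivative-based bound (effective when both distances exceed $r_N$) and the crude uniform bound (needed when a distance drops below $r_N$), and in every sub-region combine the geometric counting from the minimum-separation hypothesis with the appropriate summation estimate of Assumption \ref{assump2} to extract precisely the factor $|h|^{1-\epsilon_0}$. The bare power $|h|$ can be upgraded to $|h|^{1-\epsilon_0}$ since $|h|\leq\mathrm{diam}(\Omega)=O(1)$, and any residual $r_N^{\,\alpha}$ with $\alpha>1-\epsilon_0$ is absorbed similarly using $r_N\lesssim 1$ for $N$ large.
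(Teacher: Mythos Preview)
Your proposal is correct and follows essentially the same strategy as the paper: both arguments split the sum according to whether $|x-y_i^N|$ is near, intermediate, or far relative to the two scales $|h|$ and $r_N$, then combine the global Lipschitz bound $|g_N'|\le 1/(4\pi r_N^2)$ for the $O(1)$ nearby points (furnished by Assumption~\ref{assump0}) with (\ref{assump2-2}) for the far sum and (\ref{assump2-3}) for the intermediate annulus when $|h|\gtrsim r_N$. The paper organizes the dichotomy as $|h|\le 2r_N$ versus $|h|\ge 2r_N$ and defines its index sets $J_1^N$ so as to simultaneously remove points near $x+h$ as well as near $x$; your splitting on $d_i=|x-y_i^N|$ alone leaves those points implicit in the region $r_N\le d_i<2|h|$, but the crude bound $|G_N|\le 1/(4\pi r_N)$ you already noted disposes of them at cost $\lesssim r_N^2\lesssim|h|^{1-\epsilon_0}$, so no genuine gap remains.
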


\begin{proof}
Denote 
\[
\epsilon^N_1(x)= \f{1}{N} \sum_{1\leq i \leq N}(G_N(x+h,y_i^N) - G_N(x,y_i^N))b(y_i^N).
\]
There are two cases: 
Case 1. $|h| \leq 2 r_n$; Case 2. $|h| \geq 2 r_n$.
We first show (\ref{inequality2}) in Case 1. 

We introduce the following two sets of indices. 
\beas
J_1^N(x, h)&=& \{j: |x-y_j^N| < 2 r_N  \,\, \mbox{or }\,\,|x+h-y_j^N| < 2 r_N \},\\
J_2^N(x, h)&=& \{j: 1\leq j \leq N, j \notin J_1^N(x, h)\}.
\eeas
Then,
\beas
\epsilon^N_1(x)&=& 
 \f{1}{N} \sum_{i \in J_1^N(x,h)}(G_N(x+h,y_i^N) - G_N(x,y_i^N))b(y_i^N)\\
 && +\f{1}{N} \sum_{i \in J_2^N(x,h)}(G_N(x+h,y_i^N) - G_N(x,y_i^N))b(y_i^N) \\
&:=& \epsilon^N_{1,1}(x)+\epsilon^N_{1,2}(x).  
\eeas
By Assumption \ref{assump0}, there exists a constant $C_0$, independent of $N$ such that 
the number of elements in $J_1^N(x, h)$ is bounded by $C_0$. 
On the other hand, since $|g'_N(r)| \lesssim \f{1}{r_N^2}$ for all $r$, we can show that
\[
|G_N(x+h,y_i^N) - G_N(x,y_i^N)| \lesssim \f{1}{r_N^2}|h|. 
\]
Therefore,
\be \label{inequality2-1}
|\epsilon^N_{1,1}(x)| \lesssim \f{1}{Nr_N^2}|h| \cdot \|b\|_{C^0(\overline{\Omega})} \lesssim \f{1}{N^{\f{1}{3}}}\cdot |h| \cdot\|b\|_{C^0(\overline{\Omega})}. 
\ee

Next, for each $j \in J_2^N(x, h)$, note that $G_N(x,y_j^N)= \f{1}{|x-y_j^N|}$,
$G_N(x+h,y_j^N)= \f{1}{|x+h-y_j^N|}$. 
Thus,
\beas
\epsilon^N_{1,2}(x)&=&
|\f{1}{N} \sum_{ j \in J_2^N(x, h)}(\f{1}{|x-y_j^N|} - \f{1}{|x+h-y_j^N|})b(y_j^N) |\\
& \leq &
\f{1}{N} \sum_{ j \in J_2^N(x, h)}
\f{|h|}{|x-y_j^N|\cdot |x+h-y_j^N|} \cdot
 \|b\|_{C^0(\overline{\Omega})} \\
& \leq & \f{\|b\|_{C^0(\overline{\Omega})} \cdot |h|}{N} \left(  \sum_{ j \in J_2^N(x, h)}\f{1}{|x-y_j^N|^2}
+ \sum_{ j \in J_2^N(x, h)}\f{1}{|x+h-y_j^N|^2} \right), 
\eeas
which further yields  
\be  \label{inequality2-2}
|\epsilon^N_{1,2}(x)| 
\lesssim  \|b\|_{C^0(\overline{\Omega})} |h|^{1-\epsilon_0}, 
\ee 
by Assumption \ref{assump2}. 
Combining (\ref{inequality2-1})-(\ref{inequality2-2}), we obtain (\ref{inequality2}). 

Now, we show (\ref{inequality2}) in Case 2. Denote 
\beas
J_1^N(x, h)&=& \{j: |x-y_j^N| \leq  2 r_N  \,\, \mbox{or }\,\,|x+h-y_j^N| \leq 2 r_N \};\\
J_2^N(x, h)&=& \{j: |x-y_j^N| > 2 r_N, |x+h-y_j^N| > 2 r_N,  |x-y_j^N| \leq h, \,\,\mbox{or}\,\, |x+h-y_j^N| \leq h \};\\
J_3^N(x, h)&=& \{j: |x-y_j^N| >h,|x+h-y_j^N| > h \}.
\eeas

Then,
\beas
\epsilon^N_1(x)
&=& \f{1}{N} \sum_{i \in J_1^N(x,h)}(G_N(x+h,y_i^N) - G_N(x,y_i^N))b(y_i^N)\\
&& + \f{1}{N} \sum_{i \in J_2^N(x,h)}(G_N(x+h,y_i^N) - G_N(x,y_i^N))b(y_i^N) \\
&& + \f{1}{N} \sum_{i \in J_3^N(x,h)}(G_N(x+h,y_i^N) - G_N(x,y_i^N))b(y_i^N) \\
&:=& \epsilon^N_{1,1}(x)+\epsilon^N_{1,2}(x) +\epsilon^N_{1,3}(x).
\eeas
Following the same argument as in Case 1, we can show that
\be  \label{eps2-1}
|\epsilon^N_{1,1}(x) | \lesssim \f{1}{N^{\f{1}{3}}}|h| \cdot\|b\|_{C^0(\overline{\Omega})},
\ee
and 
\be  \label{eps2-2}
|\epsilon^N_{1,3}(x) | \lesssim \|b\|_{C^0(\overline{\Omega})} \cdot |h|^{1-\epsilon_0}. 
\ee 
We now consider $\epsilon^N_{1,2}(x)$. We have
\beas
 |\epsilon^N_{1,2}(x) | & \leq &
\f{1}{N} \sum_{ j \in J_2^N(x, h)}
\f{|h|}{|x-y_j^N|\cdot |x+h-y_j^N|}
 \|b\|_{C^0(\overline{\Omega})} \\
& \leq & \f{\|b\|_{C^0(\overline{\Omega})} }{N} \left( \sum_{\substack{
   j \in J_2^N(x, h) \\
   |x-y_j^N| \geq \f{h}{2}
  }} \f{|h|}{|x-y_j^N|\cdot |x+h-y_j^N|}
+ \sum_{\substack{
   j \in J_2^N(x, h) \\
   |x+h-y_j^N| \geq \f{h}{2}
  }} \f{|h|}{|x-y_j^N|\cdot |x+h-y_j^N|} \right)  \\
  & \leq & \f{2\|b\|_{C^0(\overline{\Omega})} }{N} \left( \sum_{\substack{
   j \in J_2^N(x, h) \\
   |x-y_j^N| \geq \f{h}{2}
  }} \f{1}{|x+h-y_j^N|}
+ \sum_{\substack{
   j \in J_2^N(x, h) \\
   |x+h-y_j^N| \geq \f{h}{2}
  }} \f{1}{|x-y_j^N|} \right) \\
  & \leq & \f{2\|b\|_{C^0(\overline{\Omega})} }{N} \left( \sum_{\substack{
   |x+h-y_j^N| \leq 3h \\
   |x+h-y_j^N| \geq 2r_N
  }} \f{1}{|x+h-y_j^N|}
+ \sum_{\substack{
    |x-y_j^N| \leq 3h \\
   |x-y_j^N| \geq 2r_N
  }} \f{1}{|x-y_j^N|} \right) 
   \lesssim   \|b\|_{C^0(\overline{\Omega})} \cdot |h|, 
\eeas
where we have used Assumption \ref{assump2}
in the last inequality. 
This combined with (\ref{eps2-1})-(\ref{eps2-2}) proves (\ref{inequality2}) in Case 2.
The proof of the lemma is then complete. 
\end{proof}





\begin{thebibliography}{99}
\bibitem{akl} H. Ammari, H. Kang, and H. Lee, \textsl{Layer Potential Techniques in Spectral Analysis}, Mathematical surveys and monographs Vol. 153, American Math. Soc., Rhode Island, 2009. 



\bibitem{book2} {H. Ammari and H. Kang}, \textsl{Polarization and Moment
Tensors with Applications to Inverse Problems and Effective Medium
Theory}, Applied Mathematical Sciences, Vol. 162, Springer-Verlag,
New York, 2007.



\bibitem{hai} H. Ammari and H. Zhang, A mathematical theory of super-resolution by using a system of sub-wavelength Helmholtz resonators. Comm. Math. Phys., 337 (2015),  379--428.

\bibitem{hai2} H. Ammari and H. Zhang, Super-resolution in high contrast media, Proc. Royal Soc. A, 2015 (471), 20140946. 

\bibitem{hai3}H. Ammari, B. Fitzpatrick, D. Gontier,  H. Lee and H. Zhang, 
Minnaert resonances for acoustic waves in bubbly media, arXiv: 1603.03982.

\bibitem{matias} H. Ammari, P. Millien, M. Ruiz, and H. Zhang, 
Mathematical analysis of plasmonic nanoparticles: the scalar  
case,  arXiv:1506.00866. 

\bibitem{caflish} R.E. Caflish, M.J. Miksis, G.C. Papanicolaou, and L Ting, Effective equations for wave propagation in bubbly liquids, J. Fluid Mech., 153 (1985), 259--273.

\bibitem{caflish2} R.E. Caflish, M.J. Miksis, G.C. Papanicolaou, and L Ting, Wave propagation in bubbly liquids at finite volume fraction, Journal of Fluid Mechanics 160 (1985), 1-14.

\bibitem{colton} D. Colton and R. Kress, \textsl{Integral Equation Methods in Scattering Theory}, Classics in Applied Math, Vol. 72, SIAM. 

\bibitem{commander} K.W. Commander and A. Prosperetti, Linear pressure waves in bubbly liquids: Comparison between theory and experiments, J. Acoust. Soc. Am., 85 (1989), 732--746. 

\bibitem{domenico} S. N. Domenico, Acoustic wave propagation in air-bubble curtains in water-Part I: History and theory, Geophysics,  47 (1982), 345--353.

\bibitem{tanter} C. Errico, J. Pierre, S. Pezet, Y. Desailly, Z. Lenkei, O. Couture, and M. Tanter, Ultra-
fast ultrasound localization microscopy for deep super-resolution vascular imaging,  Nature,
527 (2015), 499–-502.

\bibitem{fessenden} R. A. Fessenden, Method and apparatus for sound insulation: U.S. Patent No. 1348828, 1922. 

\bibitem{Figari} R. Figari, G. Papanicolaou and J. Rubinstein, Remarks on the point interaction approximation, Hydrodynamic Behavior and Interacting Particle Systems, G. Papanicolaou (ed.), Springer-Verlag New York Inc. 1987.

\bibitem{kargl} S.G. Kargl, Effective medium approach to linear acoustics in bubbly liquids, J. Acoust. Soc. Am., 111 (2002), 168--173.


\bibitem{Foldy} L. L. Foldy, The multiple scattering of waves. I. General theory of isotropic scattering by randomly distributed scatterers, Physical Review, 67.3-4 (1945), 107.

\bibitem{lanoy} M. Lanoy, R. Pierrat, F. Lemoult, M. Fink, V Leroy, A Tourin, Subwavelength focusing in bubbly media using broadband time reversal, Physical Review, B 91.22 (2015), 224202.

\bibitem{leroy1} M. Devaud, T. Hocquet, J. C. Bacri1, and V. Leroy, The Minnaert bubble: an acoustic approach, European Journal of Physics, 29.6 (2008), 1263.

\bibitem{leroy2} V. Leroy, A. Bretagne, M. Fink, A. Tourin, H. Willaime and P. Tabeling, Design and characterization of bubble phononic crystals, Applied Physics Letters, 95.17 (2009), 171904.

\bibitem{leroy3} V. Leroy, A. Strybulevych, M. Lanoy, F. Lemoult, A. Tourin, and J. H. Page, Superabsorption of acoustic waves with bubble metascreens, Physical Review, B 91.2 (2015), 020301.

\bibitem{leroy4} V. Leroy, A. Strybulevych , M.G. Scanlon, and J.H. Page, Transmission of ultrasound through a single layer of bubbles, The European Physical Journal, E 29.1 (2009), 
123-130.

\bibitem{papanicolaou} G C. Papanicolaou, Diffusion in random media, Surveys in Applied Mathematics, volume 1, Edited by J P. Keller, D W. McLaughlin and G C. Papanicolaou, Plenum Press, New York, 1995.  

\bibitem{prairie} A. J. C. La Prairie, Method of blasting: U.S. Patent No. 2699117, 1955.


\bibitem{ozawa} S. Ozawa, Point interaction potential approximation for $(-\triangle + U)^{-1}$ and eigenvalues of the Laplacian on wildly perturbed domain, Osaka J. Math. 20(1983), 923-937. 

%

\end{thebibliography}
\end{document}